\crefname{equation}{}{}
\crefname{lem}{Lemma}{Lemmas}
\crefname{thm}{Theorem}{Theorems}
\DeclareMathOperator{\D}{D}
\DeclareMathOperator{\I}{I}
\newcommand{\dual}[1]{\left\langle {#1} \right\rangle}
\newcommand{\nm}[1]{\left\lVert {#1} \right\rVert}
\newcommand{\snm}[1]{\left\lvert {#1} \right\rvert}
\newcommand{\ssnm}[1]
{
  \left\vert\kern-0.25ex
  \left\vert\kern-0.25ex
  \left\vert
  {#1}
  \right\vert\kern-0.25ex
  \right\vert\kern-0.25ex
  \right\vert
}
\def\spher@harm#1{%
  \vbox{\hbox{%
    \offinterlineskip
    \valign{&\hb@xt@2\p@{\hss$##$\hss}\vskip.2ex\cr#1\crcr}%
  }\vskip-.36ex}%
}
\def\gshone{\spher@harm{.}}
\def\gshtwo{\spher@harm{.&.}}
\def\gshthree{\spher@harm{.&.&.}}
\let\gsh\spher@harm
\newtheorem{lem}{Lemma}[section]
\newtheorem{rem}{Remark}[section]
\newtheorem{thm}{Theorem}[section]
\def\@captype{table}\makeatother
\begin{document}

\title{
	\Large \bf  Analysis of a time-stepping scheme for time fractional diffusion
	problems with nonsmooth data 
	\thanks
	{
		This work was supported in part by National Natural Science Foundation
		of China (11771312).
	}
}
\author{
	Binjie Li \thanks{Email: libinjie@scu.edu.cn},
	Hao Luo \thanks{Corresponding author. Email: galeolev@foxmail.com},
	Xiaoping Xie \thanks{Email: xpxie@scu.edu.cn} \\
	{School of Mathematics, Sichuan University, Chengdu 610064, China}
}

\date{}
\maketitle

\begin{abstract}
  This paper establishes the convergence of a time-steeping scheme for  time
  fractional diffusion problems with nonsmooth data. We first analyze the
  regularity of the model problem with nonsmooth data, and
  then prove that the time-steeping   scheme possesses optimal convergence rates in $
  L^2(0,T;L^2(\Omega)) $-norm and $ L^2(0,T;H_0^1(\Omega)) $-norm with respect
  to the regularity of the solution.  Finally,   numerical results are
  provided to verify the theoretical results.


\end{abstract}

\medskip\noindent{\bf Keywords:} fractional diffusion problem, regularity, finite element, optimal a priori estimate.

\section{Introduction}
This paper considers the following time fractional diffusion problem:
\begin{equation}
\label{eq:model}
\left\{
\begin{aligned}
\D_{0+}^\alpha (u-u_0) - \Delta u & = f   &  & \text{in $ \Omega \times (0,T) $,}         \\
u                                 & = 0   &  & \text{on $ \partial\Omega \times (0,T) $,} \\
u(0)                              & = u_0 &  & \text{ in $ \Omega $, }
\end{aligned}
\right.
\end{equation}
where $ 0 < \alpha < 1 $, $ \D_{0+}^\alpha $ is a Riemann-Liouville fractional
differential operator, $ \Omega \subset \mathbb R^d $ ($d=1,2,3$) is a convex
polygonal domain, and $ u_0 $ and $ f $ are given functions.

A considerable amount of numerical algorithms for time fractional diffusion
problems have been developed. Generally, these numerical algorithms can be
divided into three types. The first type uses finite difference
methods to approximate the time fractional derivatives. Despite their ease of
implementation, the fractional difference methods are generally of temporal
accuracy orders no greater than two; see \cite{Yuste2005, Langlands2005,
	Yuste2006, Chen2007, Lin2007, Zhuang2008, Deng2009, Zhang2009524,
	Chen2010,Cui2009, Liu2011,Gao2011,Zeng2013, Wang2014, gao2014new,
	Li2016} and the references therein. The second type applies   spectral
methods to discretize the time fractional derivatives; see \cite{Li2009,
	Zheng2015,Li2017spectral,Zayernouri2014Fractional,Zayernouri2012Karniadakis,Zayernouri2014Exponentially,yang2016spectral,Li2017A}. The main advantage of these algorithms is that they possess
high-order accuracy, provided the solution is sufficiently smooth. The third
type adopts   finite element methods to approximate the time fractional
derivatives; see \cite{Mclean2009Convergence, Mustapha2011Piecewise,
Mustapha2015Time,Li2017A,Mustapha2009Discontinuous,Mustapha2012Uniform,Mustapha2012Superconvergence,Mclean2015Time,mustapha2014well-posedness,Mustapha2014A}. These algorithms are easy to implement, like those in the first type, and possess high-order accuracy.

The convergence analysis of the aforementioned algorithms is generally carried
out on the condition that the underlying solution is sufficiently smooth. So
far, the works on the numerical analysis for nonsmooth data are very limited.
By using the Laplace transformation, Mclean and Thom\'ee \cite{Thomee2010IMA}
analyzed three fully discretizations for fractional order evolution equations, where the
initial values are allowed to have only $ L^2(\Omega) $-regularity. By using a growth
estimation of the Mittag-Leffler function, Jin et al.~\cite{Jin2013SIAM,Jin2015IMA}
analyzed the convergence of a spatial semi-discretization of problem \cref{eq:model}. They derived the following results: if $
f = 0 $, then
\[
\nm{u(t) - u_h(t)}_{L^2(\Omega)} +
h \nm{u(t)-u_h(t)}_{H_0^1(\Omega)} \leqslant
C h^2 \snm{\ln h} t^{-\alpha} \nm{u_0}_{L^2(\Omega)};
\]
if $ u_0 = 0 $ and $ 0 \leqslant \beta < 1 $, then
\[
\nm{u\!-\!u_h}_{L^2(0,T;L^2(\Omega))} +
h \nm{u\!-\!u_h}_{L^2(0,T;H_0^1(\Omega))} \leqslant
C h^{2-\beta} \nm{f}_{L^2(0,T; H^{-\beta}(\Omega))},
\]
\[
\nm{u(t)\!-\!u_h(t)}_{L^2(\Omega)}\!+\!
h \nm{u(t)\!-\!u_h(t)}_{H_0^1(\Omega)}\!\leqslant\!
C h^{2-\beta} \snm{\ln h}^2 \nm{f}_{L^\infty(0,t;H^{-\beta}(\Omega))}.
\]
Recently, McLean and Mustapha
\cite{Mclean2015Time} derived that
\[
\nm{u(t_n) - U^n}_{L^2(\Omega)} \leqslant
C t_n^{-1} \Delta t \nm{u_0}_{L^2(\Omega)}
\]
for a piecewise constant DG scheme in temporal semi-discretization of fractional diffusion problems with $ f = 0$. For more related work, we refer the reader to \cite{Cuesta2006,Mclean2010Thom}.

In this paper, we present a rigorous analysis of the convergence of a
time-stepping scheme for problem \cref{eq:model}, which uses a space of continuous 
piecewise linear functions in the spatial discretization and a space
of piecewise constant functions in the temporal discretization. We first apply the
Galerkin method to investigate the regularity of problem \cref{eq:model} with
non-smooth $ u_0 $ and $ f $, and then we derive the following error estimates:
if $ 0 < \alpha < 1/2 $ and $ 0 \leqslant \beta < 1 $, then
\begin{align*}
& (h+\tau^{\alpha/2})^{-1} \nm{u-U}_{L^2(0,T;L^2(\Omega))} +
\nm{u-U}_{L^2(0,T; H_0^1(\Omega))} \\
\leqslant{} &
C \left( h^{1-\beta} + \tau^{\alpha(1-\beta)/2} \right)
\left(
\nm{f}_{L^2(0,T;H^{-\beta}(\Omega))} +
\nm{u_0}_{H^{-\beta}(\Omega)}
\right);
\end{align*}
if $ 1/2 \leqslant \alpha < 1 $ and $ 2-1/\alpha < \beta < 1 $, then
\begin{align*}
& (h+\tau^{\alpha/2})^{-1} \nm{u-U}_{L^2(0,T;L^2(\Omega))} +
\nm{u-U}_{L^2(0,T; H_0^1(\Omega))} \\
\leqslant{} &
C \left( h^{1-\beta} + \tau^{\alpha(1-\beta)/2} \right)
\left(
\nm{f}_{L^2(0,T;H^{-\beta}(\Omega))} +
\nm{u_0}_{L^2(\Omega)}
\right);
\end{align*}
if $ 1/2 \leqslant \alpha < 1 $ and $ u_0 = 0 $, then the above estimate also holds for
all $ 0 \leqslant \beta < 1 $. Furthermore, if $ 1/2 < \alpha < 1 $ and $ u_0 = 0 $, then
we derive the optimal error estimate
\[
\nm{u-U}_{L^2(0,T;L^2(\Omega))} \leqslant C (h^2+\tau)
\nm{f}_{H^{1-\alpha}(0,T;L^2(\Omega))}.
\]
By the techniques used in our analysis, we can also derive the error estimates
under other conditions; for instance, $ u_0 $ and $ f $ are smoother than the
aforementioned cases.

The rest of this paper is organized as follows. \cref{sec:pre} introduces some
Sobolev spaces, the Riemann-Liouville fractional calculus operators, the weak
solution to problem \cref{eq:model}, and a time-stepping scheme.
\cref{sec:regu} investigates the regularity of the weak solution, and
\cref{sec:conv} establishes the convergence of the time-steeping scheme.
Finally, \cref{sec:numer} provides some numerical experiments to verify the
theoretical results.

\section{Preliminaries}
\label{sec:pre}
\textit{\textbf{Sobolev Spaces.}} For a Lebesgue measurable subset $ \omega $ of
$ \mathbb R^l $ ($l=1,2,3$), we use $ H^\gamma(\omega) $ ($ -\infty < \gamma <
\infty $) and $ H_0^\gamma(\omega) $ ($ 0<\gamma<\infty $) to denote two
standard Sobolev spaces \cite{Tartar2007}. Let $ X $ be a separable Hilbert
space with an inner product $ (\cdot,\cdot)_X $ and an orthonormal basis $
\{e_i: i \in \mathbb N\} $. We use $ H^\gamma(0,T;X) $ ($0 \leqslant \gamma <
\infty $) to denote an usual vector valued Sobolev space, and for $ 0 < \gamma <
1/2 $, we also use the norm
\[
  \snm{v}_{H^\gamma(0,T;X)} := \left(
    \sum_{i=0}^\infty \snm{(v,e_i)_X}_{H^\gamma(0,T)}^2
  \right)^{1/2}, \quad \forall v \in H^\gamma(0,T;X).
\]
Here, the norm $ \snm{\cdot}_{H^\gamma(0,T)} $ is given by
\[
  \snm{w}_{H^\gamma(0,T)} := \left(
    \int_\mathbb R \snm{\xi}^{2\gamma}
    \snm{\mathcal F(w\chi_{(0,T)})(\xi)}^2 \,\mathrm{d}\xi
  \right)^{1/2}, \quad \forall w \in H^\gamma(0,T),
\]
where $ \mathcal F: L^2(\mathbb R) \to L^2(\mathbb R) $ is the Fourier transform
operator and $ \chi_{(0,T)} $ is the indicator function of $ (0,T) $.

\medskip\noindent\textit{\textbf{Fractional Calculus Operators.}} Let $ X $ be a
Banach space and let $ -\infty \leqslant a < b \leqslant \infty $. For $ 0 <
\gamma < \infty $, define
\begin{align*}
  \left(\I_{a+}^\gamma v\right)(t) &:=
  \frac1{ \Gamma(\gamma) }
  \int_a^t (t-s)^{\gamma-1} v(s) \, \mathrm{d}s, \quad t\in(a,b), \\
  \left(\I_{b-}^\gamma v\right)(t) &:=
  \frac1{ \Gamma(\gamma) }
  \int_t^b (s-t)^{\gamma-1} v(s) \, \mathrm{d}s, \quad t\in(a,b),
\end{align*}
for all $ v \in L^1(a,b;X) $, where $ \Gamma(\cdot) $ is the gamma function.
For $ j-1 < \gamma < j $ with $ j \in \mathbb N_{>0} $, define
\begin{align*}
  \D_{a+}^\gamma & := \D^j \I_{a+}^{j-\gamma}, \\
  \D_{b-}^\gamma & := (-1)^j \D^j \I_{b-}^{j-\gamma},
\end{align*}
where $ \D $ is the first-order differential operator in the distribution sense.

\medskip\noindent\textit{\textbf{Eigenvectors of $-\Delta$.}} It is well known
that there exists an orthonormal basis
\[
  \{\phi_i: i \in \mathbb N \} \subset H_0^1(\Omega) \cap H^2(\Omega)
\]
of $ L^2(\Omega) $ such that
\[
  -\Delta \phi_i = \lambda_i \phi_i,
\]
where $ \{ \lambda_i: i \in \mathbb N \} \subset \mathbb R_{>0} $ is a
non-decreasing sequence. For any $ 0 \leqslant \gamma < \infty $, define
\[
  \dot H^\gamma(\Omega) := \left\{
    v \in L^2(\Omega):\
    \sum_{i=0}^\infty \lambda_i^\gamma (v,\phi_i)_{L^2(\Omega)}^2
    < \infty
  \right\},
\]
and equip this space with the norm
\[
  \nm{\cdot}_{\dot H^\gamma(\Omega)} :=
  \left(
    \sum_{i=0}^\infty \lambda_i^\gamma (\cdot, \phi_i)_{L^2(\Omega)}^2
  \right)^{1/2}.
\]
For $ \gamma \in [0,1] \setminus \{0.5\} $, the space $ \dot H^\gamma(\Omega) $
coincides with $ H_0^\gamma(\Omega) $ with equivalent norms, and for $ 1 <
\gamma \leqslant 2 $, the space $ \dot H^\gamma(\Omega) $ is continuously
embedded into $ H^\gamma(\Omega) $.

\medskip\noindent\textit{\textbf{Weak Solution.}} Define
\[
  W := H^{\alpha/2}(0,T;L^2(\Omega)) \cap L^2(0,T;\dot H^1(\Omega))
\]
and endow this space with the norm
\[
  \nm{\cdot}_{W} :=
  \snm{\cdot}_{H^{\alpha/2}(0,T;L^2(\Omega))} +
  \nm{\cdot}_{L^2(0,T;\dot H^1(\Omega))}.
\]
Assuming that
\begin{equation}
  \label{eq:basic_assum}
  \D_{0+}^\alpha u_0 + f \in W^*,
\end{equation}
we call $ u \in W $ a weak solution to problem \cref{eq:model} if
\begin{equation}
  \label{eq:weak_form}
  \dual{ \D_{0+}^\alpha u, v }_{ H^{\alpha/2}( 0,T;L^2(\Omega) ) } +
  \dual{ \nabla u, \nabla v }_{ \Omega \times (0,T) } =
  \dual{\D_{0+}^\alpha u_0 + f,v}_{W}
\end{equation}
for all $ v \in W $. Throughout the paper, if $ \omega $ is a Lebesgue
measurable set of $ \mathbb R^l $ ($ l= 1,2,3,4 $) then the symbol $
\dual{p,q}_\omega $ means $ \int_\omega pq $, and if $ X $ is a Banach space
then $ \dual{\cdot,\cdot}_X $ means the duality pairing between $ X^* $ (the
dual space of $ X $) and $ X $.
\begin{rem}
  \label{rem:well-posedness}
  The above weak solution is first introduced by Li and Xu \cite{Li2009}.
  Evidently, the well-known Lax-Milgram theorem indicates that problem
  \cref{eq:model} admits a unique weak solution by \cref{lem:coer}. Moreover,
  \[
    \nm{u}_{W} \leqslant C \nm{\D_{0+}^\alpha u_0 + f}_{W^*},
  \]
  where $ C $ is a positive constant that depends only on $ \alpha $.
\end{rem}

\medskip\noindent\textit{\textbf{Discretization.}} Let
\[
  0 = t_0 < t_1 < \ldots < t_J = T
\]
be a partition of $ [0,T] $. Set $ I_j := (t_{j-1},t_j) $ for each $ 1 \leqslant j
\leqslant J $, and we use $ \tau $ to denote the maximum length of these intervals. Let $
\mathcal K_h $ be a conventional conforming and shape regular triangulation of $ \Omega $
consisting of $ d $-simplexes, and we use $ h $ to denote the maximum diameter of the
elements in $ \mathcal K_h $. Define
\begin{align*}
  \mathcal S_h&:= \left\{
    v_h \in H_0^1(\Omega):\
    v_h|_K \in P_1(K),\ \forall \,K \in \mathcal K_h
  \right\},\\
  \mathcal M_{h,\tau} &:= \left\{
    V \in L^2(0,T;\mathcal S_h):\
    V|_{I_j} \in P_0(I_j;\mathcal S_h),\ \forall\,1\leqslant j\leqslant J
  \right\},
\end{align*}
where $ P_1(K) $ is the set of all linear polynomials defined on $ K $, and $
P_0(I_j; \mathcal S_h) $ is the set of all constant $ \mathcal S_h$-valued
functions defined on $ I_j $.

Naturally, the discretization of problem \cref{eq:weak_form} reads as follows:
seek $ U \in \mathcal M_{h,\tau} $ such that
\begin{equation}
  \label{eq:algor}
  \dual{\D_{0+}^\alpha U,V}_{H^{\alpha/2}(0,T;L^2(\Omega))} +
  \dual{\nabla U,\nabla V}_{\Omega \times (0,T)} =
  \dual{\D_{0+}^\alpha u_0 + f,V}_{W}
\end{equation}
for all $ V \in \mathcal M_{h,\tau} $.

\begin{rem}
  Similarly to the stability estimate in \cref{rem:well-posedness}, we have
  \[
    \nm{U}_W \leqslant C \nm{\D_{0+}^\alpha u_0 + f}_{W^*},
  \]
  where $ C $ is a positive constant depending only on $ \alpha $. Therefore,
  problem \cref{eq:algor} is also stable under condition \cref{eq:basic_assum}.
\end{rem}

\section{Regularity}
\label{sec:regu}
Let us first consider the following problem: seek $ y \in H^{\alpha/2}(0,T) $
such that
\begin{equation}
  \label{eq:frac_ode_weak}
  \dual{ \D_{0+}^\alpha(y-y_0), z }_{ H^{\alpha/2}(0,T) } +
  \lambda \dual{y,z}_{(0,T)} = \dual{g,z}_{(0,T)}
\end{equation}
for all $ z \in H^{\alpha/2}(0,T) $, where $ g \in L^2(0,T) $, and $ y_0 $ and $
\lambda>1 $ are two real constants. By \cref{lem:coer}, the Lax-Milgram theorem
indicates that the above problem admits a unique solution $ y \in
H^{\alpha/2}(0,T) $. Moreover, it is evident that
\begin{equation}
  \label{eq:frac_ode}
  \D_{0+}^\alpha (y-y_0) = g -\lambda y
\end{equation}
in $ L^2(0,T) $.

For convenience, we use the following convention: if the symbol $ C $ has
subscript(s), then it means a positive constant that depends only on its
subscript(s), and its value may differ at each of its occurrence(s).
Additionally, in this section we assume that $ u $ and $ y $ are the solutions
to problems \cref{eq:weak_form,eq:frac_ode_weak}, respectively.

\begin{lem}
  \label{lem:regu_ode_I}
  If $ 0 < \alpha < 1/2 $ and $ 0 \leqslant \beta < 1 $, then
  \begin{equation}
    \label{eq:regu_ode_I}
    \begin{aligned}
      & \lambda^{\beta/2}
      \snm{y}_{H^{\alpha(1-\beta/2)}(0,t)} +
      \lambda^{(1+\beta)/2} \snm{y}_{H^{(1-\beta)\alpha/2}(0,t)} +
      \lambda \nm{y}_{L^2(0,t)} \\
      \leqslant{} &
      C_\alpha \left( \nm{g}_{L^2(0,t)} + t^{1/2-\alpha} \snm{y_0} \right)
    \end{aligned}
  \end{equation}
  for all $ 0 < t < T $.
\end{lem}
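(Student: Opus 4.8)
The plan is to convert the weak problem into the pointwise identity \cref{eq:frac_ode}, restrict it to $(0,t)$, and absorb the initial datum into the right-hand side. Since $\D_{0+}^\alpha$ is causal, the identity \cref{eq:frac_ode} restricts to $L^2(0,t)$, and $y|_{(0,t)}\in H^{\alpha/2}(0,t)$; hence it suffices to argue on $(0,t)$. Writing $\D_{0+}^\alpha(y-y_0)=\D_{0+}^\alpha y-\D_{0+}^\alpha y_0$ and using $\D_{0+}^\alpha y_0=y_0 t^{-\alpha}/\Gamma(1-\alpha)$, I would recast \cref{eq:frac_ode} as $\D_{0+}^\alpha y+\lambda y=\tilde g$ in $L^2(0,t)$, where $\tilde g:=g+y_0 t^{-\alpha}/\Gamma(1-\alpha)$. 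Here the hypothesis $\alpha<1/2$ enters for the first time: it makes $t^{-\alpha}\in L^2(0,t)$ with $\nm{t^{-\alpha}}_{L^2(0,t)}=(1-2\alpha)^{-1/2}t^{1/2-\alpha}$, so $\nm{\tilde g}_{L^2(0,t)}\leqslant C_\alpha(\nm{g}_{L^2(0,t)}+t^{1/2-\alpha}\snm{y_0})$, and the whole claim reduces to bounding the three left-hand terms by $C_\alpha\nm{\tilde g}_{L^2(0,t)}$.

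Next I would establish two endpoint bounds for the map $\tilde g\mapsto y$. For the $L^2$ endpoint, pairing $\D_{0+}^\alpha y+\lambda y=\tilde g$ with $y$ and invoking \cref{lem:coer} (which gives $\dual{\D_{0+}^\alpha y,y}_{(0,t)}\geqslant 0$, a legitimate $L^2$ pairing since $\D_{0+}^\alpha y=\tilde g-\lambda y\in L^2$) yields $\lambda\nm{y}_{L^2(0,t)}\leqslant\nm{\tilde g}_{L^2(0,t)}$, which is already the third term. For the high-regularity endpoint I would first show $y=\I_{0+}^\alpha(\D_{0+}^\alpha y)$: because $y\in L^2$ and $\alpha<1/2$, the boundary contribution $(\I_{0+}^{1-\alpha}y)(0^+)$ vanishes, so the inversion formula $\I_{0+}^\alpha\D_{0+}^\alpha y=y$ holds with no remainder. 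The mapping property $\snm{\I_{0+}^\alpha\psi}_{H^\alpha(0,t)}\leqslant C_\alpha\nm{\psi}_{L^2(0,t)}$ together with $\nm{\D_{0+}^\alpha y}_{L^2(0,t)}=\nm{\tilde g-\lambda y}_{L^2(0,t)}\leqslant 2\nm{\tilde g}_{L^2(0,t)}$ then gives $\snm{y}_{H^\alpha(0,t)}\leqslant C_\alpha\nm{\tilde g}_{L^2(0,t)}$.

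The two remaining terms follow by interpolation. Reading the endpoints as $\nm{y}_{H^\alpha(0,t)}\leqslant C_\alpha\nm{\tilde g}_{L^2(0,t)}$ and $\nm{y}_{L^2(0,t)}\leqslant C_\alpha\lambda^{-1}\nm{\tilde g}_{L^2(0,t)}$ (the latter using $\lambda>1$ to control the $L^2$ part of the $H^\alpha$ norm), interpolation with $[H^\alpha(0,t),L^2(0,t)]_\theta=H^{(1-\theta)\alpha}(0,t)$ produces $\lambda^\theta\snm{y}_{H^{(1-\theta)\alpha}(0,t)}\leqslant C_\alpha\nm{\tilde g}_{L^2(0,t)}$ for every $\theta\in[0,1]$. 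Choosing $\theta=\beta/2$ reproduces the first term $\lambda^{\beta/2}\snm{y}_{H^{\alpha(1-\beta/2)}(0,t)}$, and $\theta=(1+\beta)/2$ reproduces the second term $\lambda^{(1+\beta)/2}\snm{y}_{H^{(1-\beta)\alpha/2}(0,t)}$; both interpolation indices $(1-\theta)\alpha$ stay strictly below $1/2$, so the identification of the interpolation spaces (and the underlying truncation to $(0,t)$) is unproblematic. Summing the three bounds and inserting the estimate for $\nm{\tilde g}_{L^2(0,t)}$ then yields \cref{eq:regu_ode_I}.

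I expect the high-regularity endpoint to be the main obstacle: it rests on the exact inversion $y=\I_{0+}^\alpha(\D_{0+}^\alpha y)$ and on the boundedness $\I_{0+}^\alpha:L^2(0,t)\to H^\alpha(0,t)$ with a $t$-independent constant, both of which depend delicately on $\alpha<1/2$ — the same hypothesis that makes $t^{-\alpha}$ square-integrable and keeps every Sobolev index below $1/2$. Once these are secured, the energy estimate and the interpolation step are routine.
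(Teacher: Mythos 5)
Your proof is correct, and its skeleton --- two endpoint bounds plus multiplicative interpolation with matching powers of $\lambda$ --- is the same as the paper's; but your route to the high endpoint and your organization of the interpolation differ in ways worth noting. The paper keeps $\D_{0+}^\alpha y_0$ on the right-hand side and runs \emph{two} energy estimates: testing \cref{eq:frac_ode} with $y$ (giving control of $\lambda\snm{y}_{H^{\alpha/2}(0,t)}^2+\lambda^2\nm{y}_{L^2(0,t)}^2$) and testing with $\D_{0+}^\alpha y$ (giving control of $\snm{y}_{H^\alpha(0,t)}^2+\lambda\snm{y}_{H^{\alpha/2}(0,t)}^2$), and then interpolates in the two adjacent pairs $(H^{\alpha/2},H^\alpha)$ and $(L^2,H^{\alpha/2})$. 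You instead absorb $y_0$ into $\tilde g$, keep only the positivity part of the first energy estimate to get $\lambda\nm{y}_{L^2(0,t)}\leqslant\nm{\tilde g}_{L^2(0,t)}$, and replace the second energy estimate by the triangle inequality $\nm{\D_{0+}^\alpha y}_{L^2(0,t)}\leqslant\nm{\tilde g}_{L^2(0,t)}+\lambda\nm{y}_{L^2(0,t)}\leqslant 2\nm{\tilde g}_{L^2(0,t)}$ combined with the inversion $y=\I_{0+}^\alpha(\D_{0+}^\alpha y)$ and the smoothing bound $\snm{\I_{0+}^\alpha\psi}_{H^\alpha(0,t)}\leqslant C_\alpha\nm{\psi}_{L^2(0,t)}$; interpolating both intermediate terms in the single family $(L^2,H^\alpha)$ then reproduces the stated $\lambda$-weights, because those weights are affine in the Sobolev index. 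Your version is leaner (no $H^{\alpha/2}$ bookkeeping, no coercivity needed in the second step); its one extra burden is exactly the ingredient you flagged, namely the $t$-uniform smoothing bound for $\I_{0+}^\alpha$. That bound is the quantitative, $\beta=0$ case of \cref{lem:regu} transferred from $(0,1)$ to $(0,t)$ by scaling (the powers of $t$ coming from the seminorm and from $\I_{0+}^\alpha$ cancel exactly), and relying on it is not a liability relative to the paper: the paper's own second energy estimate produces $\nm{\D_{0+}^\alpha y}_{L^2(0,t)}^2$ on the left and implicitly needs the very same fact to convert that quantity into $\snm{y}_{H^\alpha(0,t)}^2$.
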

\begin{proof}
  Let us first prove that $ y \in H^\alpha(0,T) $. By the definition of $
  \D_{0+}^\alpha $, equality \cref{eq:frac_ode} implies
  \[
    \left( \I_{0+}^{1-\alpha}(y-y_0) \right)' = g - \lambda y,
  \]
  so that using integration by parts gives
  \[
    \I_{0+}^{1-\alpha} (y-y_0) =
    \left( \I_{0+}^{1-\alpha} (y-y_0) \right)(0) +
    \I_{0+} (g-\lambda y).
  \]
  In addition, since
  \[
    \snm{ \left( \I_{0+}^{1-\alpha} (y-y_0) \right)(s) } \leqslant
    \frac1{\Gamma(1-\alpha)} \sqrt{\frac{s^{1-2\alpha}}{1-2\alpha}}
    \, \nm{y-y_0}_{L^2(0,s)}, \quad 0 < s < T,
  \]
  we have
  \[
    \left( \I_{0+}^{1-\alpha} (y-y_0) \right)(0) =
    \lim_{s \to 0+} \left( \I_{0+}^{1-\alpha} (y-y_0) \right)(s) = 0.
  \]
  Consequently,
  \[
    \I_{0+}^{1-\alpha} (y-y_0) = \I_{0+} (g-\lambda y),
  \]
  and hence a simple computation gives that
  \[
    y = y_0 + \I_{0+}^\alpha (g-\lambda y).
  \]
  Therefore, \cref{lem:regu} indicates that $ y \in H^\alpha(0,T) $.

  Then let us prove that
  \begin{equation}
    \label{eq:regu_ode_I-12}
    \snm{y}_{H^\alpha(0,t)}^2 + \lambda \snm{y}_{H^{\alpha/2}(0,t)}^2 +
    \lambda^2 \nm{y}_{L^2(0,t)}^2 \leqslant C_\alpha
    \left(
      \nm{g}_{L^2(0,t)}^2 + t^{1-2\alpha} \snm{y_0}^2
    \right).
  \end{equation}
  Multiplying both sides of \cref{eq:frac_ode} by $ y $ and integrating over $
  (0,t) $ yields
  \[
    \dual{\D_{0+}^\alpha y, y}_{(0,t)} +
    \lambda \nm{y}_{L^2(0,t)}^2 =
    \dual{g,y}_{(0,t)} +
    \dual{\D_{0+}^\alpha y_0, y}_{(0,t)}.
  \]
  Since
  \begin{align*}
    \dual{g,y}_{(0,t)} & \leqslant \frac1\lambda \nm{g}_{L^2(0,t)}^2 +
    \frac\lambda4 \nm{y}_{L^2(0,t)}^2, \\
    \dual{\D_{0+}^\alpha y_0,y}_{(0,t)} & \leqslant
    \frac1\lambda \nm{\D_{0+}^\alpha y_0}_{L^2(0,t)}^2 +
    \frac\lambda4 \nm{y}_{L^2(0,t)}^2,
  \end{align*}
  we have
  \[
    \dual{\D_{0+}^\alpha y,y}_{(0,t)} + \lambda \nm{y}_{L^2(0,t)}^2
    \leqslant C_\alpha \left(
      \lambda^{-1} \nm{g}_{L^2(0,t)}^2 +
      \lambda^{-1} t^{1-2\alpha} \snm{y_0}^2
    \right).
  \]
  From \cref{lem:coer} it follows that
  \[
    \lambda \snm{y}_{H^{\alpha/2}(0,t)}^2 + \lambda^2 \nm{y}_{L^2(0,t)}^2
    \leqslant C_\alpha \left(
      \nm{g}_{L^2(0,t)}^2 + t^{1-2\alpha} \snm{y_0}^2
    \right).
  \]
  Analogously, multiplying both sides of \cref{eq:frac_ode} by $ \D_{0+}^\alpha
  y $ and integrating over $ (0,t) $, we obtain
  \[
    \snm{y}_{H^\alpha(0,t)}^2 + \lambda \snm{y}_{H^{\alpha/2}(0,t)}^2
    \leqslant C_\alpha
    \left( \nm{g}_{L^2(0,t)}^2 + t^{1-2\alpha} \snm{y_0}^2 \right).
  \]
  Therefore, combining the above two estimates yields \cref{eq:regu_ode_I-12}.

  Now, let us prove that
  \begin{equation}
    \label{eq:regu_ode_I-34}
    \lambda^\beta \snm{y}_{H^{\alpha(1-\beta/2)}(0,t)}^2 +
    \lambda^{1+\beta} \snm{y}_{H^{\alpha(1-\beta)/2}(0,t)}^2
    \leqslant C_\alpha \left(
      \nm{g}_{L^2(0,t)}^2 + t^{1-2\alpha} \snm{y_0}^2
    \right).
  \end{equation}
  Since
  \[
    \alpha(1-\beta/2) = \beta\, \alpha/2 + (1-\beta)\, \alpha,
  \]
  applying \cite[Proposition~1.32]{Bahouri2011} yields
  \[
    \snm{y}_{H^{\alpha(1-\beta/2)}}(0,t) \leqslant
    \snm{y}_{H^{\alpha/2}(0,t)}^\beta
    \snm{y}_{H^\alpha(0,t)}^{1-\beta}.
  \]
  Therefore, by \cref{eq:regu_ode_I-12} we obtain
  \begin{align*}
    & \lambda^\beta \snm{y}_{H^{\alpha(1-\beta/2)}(0,t)}^2 \leqslant
    \left(\lambda \snm{y}_{H^{\alpha/2}(0,T)}^2 \right)^\beta
    \left( \snm{y}_{H^\alpha(0,t)}^2 \right)^{1-\beta} \\
    \leqslant{} &
    \lambda \snm{y}_{H^{\alpha/2}(0,t)}^2 +
    \snm{y}_{H^\alpha(0,t)}^2 \\
    \leqslant{} &
    C_\alpha \left( \nm{g}_{L^2(0,t)}^2 + t^{1-2\alpha} \snm{y_0}^2 \right),
  \end{align*}
  by the Young's inequality. Analogously, we have
  \[
    \lambda^{1+\beta} \snm{y}_{H^{(1-\beta)\alpha/2}(0,t)}^2 \leqslant
    C_\alpha \left( \nm{g}_{L^2(0,t)}^2 + t^{1-2\alpha} \snm{y_0}^2 \right),
  \]
  and using the above two estimates then proves \cref{eq:regu_ode_I-34}.

  Finally, combing \cref{eq:regu_ode_I-12,eq:regu_ode_I-34} yields
  \cref{eq:regu_ode_I} and thus concludes the proof.
\end{proof}

\begin{lem}
  \label{lem:regu_ode_II}
  If $ 1/2 \leqslant \alpha < 1 $ and $ 0 \leqslant \theta < 1/\alpha -1 $, then
  \begin{small}
  \begin{equation}
    \label{eq:regu_ode_II}
    \begin{aligned}
      & \lambda^{(\theta-1)/2} \nm{y}_{H^\alpha(0,T)} +
      \nm{y}_{H^{\alpha(1+\theta)/2}(0,T)} +
      \lambda^{\theta/2} \snm{y}_{H^{\alpha/2}(0,T)} +
      \lambda^{1/2} \nm{y}_{H^{\alpha\theta/2}(0,T)} \\
      & {} + \lambda^{(1+\theta)/2} \nm{y}_{L^2(0,T)} \leqslant
      C_{\alpha,\theta,T} \left(
        \lambda^{(\theta-1)/2} \nm{g}_{L^2(0,T)} + \snm{y_0}
      \right).
    \end{aligned}
  \end{equation}
  \end{small}
\end{lem}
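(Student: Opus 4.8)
The plan is to exploit linearity and split $ y = y_0\,p + q $, where $ p $ solves \cref{eq:frac_ode_weak} with data $ y_0 = 1 $, $ g = 0 $, and $ q $ solves it with $ y_0 = 0 $ and the given $ g $. For $ q $ the initial constant is absent, so the energy arguments behind \cref{lem:regu_ode_I} apply verbatim over the whole range $ 0 < \alpha < 1 $: testing \cref{eq:frac_ode} (with $ y_0 = 0 $) against $ q $ and invoking the coercivity of \cref{lem:coer}, together with the representation $ q = \I_{0+}^\alpha(g-\lambda q) $ and the mapping property of \cref{lem:regu}, yields
\[
  \nm{q}_{H^\alpha(0,T)} \leqslant C_\alpha \nm{g}_{L^2(0,T)}, \quad
  \snm{q}_{H^{\alpha/2}(0,T)} \leqslant C_\alpha \lambda^{-1/2}\nm{g}_{L^2(0,T)}, \quad
  \nm{q}_{L^2(0,T)} \leqslant C_\alpha \lambda^{-1}\nm{g}_{L^2(0,T)}.
\]
The intermediate orders $ \alpha(1+\theta)/2 $ and $ \alpha\theta/2 $ then follow by the interpolation inequality \cite[Proposition~1.32]{Bahouri2011}, exactly as in \cref{lem:regu_ode_I}, and one checks directly that each $ q $-term on the left-hand side of \cref{eq:regu_ode_II} is bounded by $ C \lambda^{(\theta-1)/2}\nm{g}_{L^2(0,T)} $. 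Since $ y_0 = 0 $ here, none of these steps needs $ \alpha < 1/2 $.

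The genuinely new part is the homogeneous solution $ p(t) = E_\alpha(-\lambda t^\alpha) $, for which I would argue by self-similar scaling rather than by an energy identity (a direct energy estimate only gives the suboptimal decay $ \nm{p}_{L^2} \lesssim \lambda^{-1/2} $, which is too weak near the endpoint $ \theta = 1/\alpha - 1 $). Writing $ p(t) = \Phi(\lambda^{1/\alpha} t) $ with the fixed profile $ \Phi(\sigma) := E_\alpha(-\sigma^\alpha) $, a change of variables shows that every (semi)norm appearing in \cref{eq:regu_ode_II} is an explicit power of $ \lambda^{1/\alpha} $ times the corresponding norm of $ \Phi $; for instance $ \nm{p}_{L^2(0,T)}^2 = \lambda^{-1/\alpha}\nm{\Phi}_{L^2(0,\lambda^{1/\alpha}T)}^2 $, while for a homogeneous seminorm of order $ s $ on the line one has $ \snm{\Phi(\lambda^{1/\alpha}\cdot)}_{H^s}^2 = \lambda^{(2s-1)/\alpha}\snm{\Phi}_{H^s}^2 $. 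Using the classical decay $ \snm{\Phi(\sigma)} \leqslant C_\alpha(1+\sigma^\alpha)^{-1} $ of the Mittag-Leffler function, together with $ \Phi'(\sigma) = -\sigma^{\alpha-1} E_{\alpha,\alpha}(-\sigma^\alpha) $, the profile $ \Phi $ belongs to $ L^2(0,\infty) $, $ H^{\alpha/2}(0,\infty) $ and $ H^\alpha(0,\infty) $ precisely because $ \alpha > 1/2 $; substituting the resulting powers of $ \lambda^{1/\alpha} $ shows that the requirement that each weighted $ p $-term be bounded by $ C\snm{y_0} $ is exactly the condition $ \theta \leqslant 1/\alpha - 1 $.

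The two endpoints dictate the admissible range: the weighted $ L^2 $-term forces $ \lambda^{(1+\theta)/2}\nm{p}_{L^2} \lesssim \lambda^{(1+\theta)/2 - 1/(2\alpha)} $ to stay bounded, and the weighted $ H^\alpha $-term forces $ \lambda^{(\theta-1)/2}\nm{p}_{H^\alpha} \lesssim \lambda^{(\theta+1)/2 - 1/(2\alpha)} $ to stay bounded, both giving $ \theta \leqslant 1/\alpha - 1 $; the orders $ \alpha(1+\theta)/2 $ and $ \alpha\theta/2 $ are then controlled by interpolating between $ L^2 $ and $ H^\alpha $ as before. Finally I would assemble \cref{eq:regu_ode_II} from the $ p $- and $ q $-estimates by the triangle inequality.

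The main obstacle is the homogeneous part, where two points need care. First, at the borderline $ \alpha = 1/2 $ the profile $ \Phi $ just fails to be square-integrable on $ (0,\infty) $, so one must keep the finite window $ (0,\lambda^{1/\alpha}T) $, where $ \nm{\Phi}_{L^2(0,\lambda^{1/\alpha}T)}^2 $ carries a logarithmic factor; the strict inequality $ \theta < 1/\alpha - 1 $ is exactly what absorbs this logarithm. Second, the scaling identities for the fractional seminorms are cleanest on the whole line, whereas the seminorms of order below $ 1/2 $ in \cref{eq:regu_ode_II} use the zero extension on $ (0,T) $: the boundary layer $ p(0^+) = y_0 $ is correctly captured by the scaling (its seminorm decays like $ \lambda^{(\alpha-1)/(2\alpha)} $), but truncating $ \Phi $ at $ \sigma = \lambda^{1/\alpha}T $ introduces a spurious jump of size $ p(T) = O(\lambda^{-1}) $ at the right endpoint, and verifying that its seminorm contribution is of lower order is the technical heart of the argument.
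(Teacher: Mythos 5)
Your proposal is viable but takes a genuinely different route from the paper, and it is worth seeing how the paper sidesteps exactly the obstacle that drives your construction. The paper never splits $y$ into homogeneous and inhomogeneous parts and never touches the Mittag--Leffler function: it runs energy estimates on the full solution, and the difficulty you flag (that a naive energy estimate only gives $\nm{p}_{L^2}\lesssim\lambda^{-1/2}$) is resolved \emph{inside} the energy framework by a weighted Hardy inequality (\cite[Lemma 16.3]{Tartar2007}). Concretely, the data term is written as $\dual{\D_{0+}^\alpha y_0, y}_{H^{\alpha/2}(0,T)} = \tfrac{y_0}{\Gamma(1-\alpha)}\dual{t^{-\alpha},y}_{(0,T)}$, Cauchy--Schwarz is applied with the splitting $t^{-\alpha}=t^{-\alpha+(1-\theta)\alpha/2}\cdot t^{-(1-\theta)\alpha/2}$, the singular weight is absorbed via $\int_0^T t^{-(1-\theta)\alpha}\snm{y(t)}^2\,\mathrm{d}t\lesssim \snm{y}^2_{H^{(1-\theta)\alpha/2}(0,T)}$, and the intermediate seminorm is interpolated as $\nm{y}_{L^2(0,T)}^\theta\snm{y}_{H^{\alpha/2}(0,T)}^{1-\theta}$; the constraint $\theta<1/\alpha-1$ is precisely the integrability of $t^{-\alpha(1+\theta)}$ near $0$. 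This yields the optimal decay $\lambda\nm{y}_{L^2(0,T)}^2\lesssim \lambda^{-1}\nm{g}^2_{L^2(0,T)}+\lambda^{-\theta}\snm{y_0}^2$ in a few lines; a second identity obtained by testing with $\D_{0+}^\alpha(y-y_0)$ gives the $H^\alpha$ bound, and interpolation finishes. Your scaling argument buys something the paper's proof does not make visible: it shows why $\theta=1/\alpha-1$ is the threshold, and for $\alpha>1/2$ it would even reach the endpoint case. But it carries two costs. First, you must rigorously identify the weak solution with the Mittag--Leffler representation; note that your claim that the argument of \cref{lem:regu_ode_I} applies ``verbatim'' to $q$ fails for $\alpha\geqslant 1/2$, since the step showing $\left(\I_{0+}^{1-\alpha}(y-y_0)\right)(0)=0$ there uses $1-2\alpha>0$; one needs instead the paper's replacement argument, namely that the spurious term $c\,t^{\alpha-1}/\Gamma(\alpha)$ cannot belong to $H^{\alpha/2}(0,T)$, whence $c=0$. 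Second, the truncation step you call the technical heart is left open; it is in fact fillable by standard facts---for orders $s<1/2$ the indicator of an interval is a pointwise multiplier on $H^s(\mathbb R)$ with norm independent of the interval, so $\snm{\Phi}_{H^s(0,L)}$ is bounded uniformly in $L$ once $\Phi\in L^2(0,\infty)$, while for the order-$\alpha$ norm with $\alpha>1/2$ the intrinsic restriction norm is monotone in the interval---but as written your proof is incomplete there, and at the borderline $\alpha=1/2$ (which the lemma includes) you face a double degeneracy, $\Phi\notin L^2(0,\infty)$ and $\Phi'\notin L^2$ near $0$, that your logarithm-absorption remark only partially addresses, whereas the paper's energy argument treats $\alpha=1/2$ with no extra effort.
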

\begin{proof}
  Proceeding as in the proof of \cref{lem:regu_ode_I} yields
  \[
    y = y_0 + \frac{c}{\Gamma(\alpha)} t^{\alpha-1} +
    \I_{0+}^\alpha (g-\lambda y),
  \]
  where
  \[
    c = \left( \I_{0+}^{1-\alpha} (y-y_0) \right)(0).
  \]
  Since $ y \in H^{\alpha/2}(0,T) $ and \cref{lem:regu} implies $
  \I_{0+}^\alpha(g-\lambda y) \in H^\alpha(0,T) $, it is evident that $ c= 0 $,
  and hence
  \[
    y = y_0 + \I_{0+}^\alpha (g-\lambda y) \in H^\alpha(0,T).
  \]
  Furthermore, \cref{lem:regu} indicates that
  \begin{equation}
    \label{eq:xy-1}
    \nm{y}_{H^\alpha(0,T)} \leqslant
    C_{\alpha,T} \left(
      \snm{y_0} + \nm{g-\lambda y}_{L^2(0,T)}
    \right).
  \end{equation}

  Now, we proceed to prove \cref{eq:regu_ode_II}, and since the techniques used
  below are similar to that used in the proof of \cref{lem:regu_ode_I}, the
  forthcoming proof will be brief. Firstly, let us prove that
  \begin{equation}
    \label{eq:regu_ode_II-1}
    \snm{y}_{H^{\alpha/2}(0,T)}^2 + \lambda \nm{y}_{L^2(0,T)}^2
    \leqslant C_{\alpha,\theta,T}
    \left(
      \lambda^{-1} \nm{g}_{L^2(0,T)}^2 + \lambda^{-\theta} \snm{y_0}^2
    \right).
  \end{equation}
  Using the standard estimate that (\cite[Lemma 16.3]{Tartar2007})
  \[
    \int_0^T t^{-(1-\theta)\alpha} \snm{y(t)}^2 \, \mathrm{d}t
    \leqslant C_{\alpha,\theta} \snm{y}_{H^{(1-\theta)\alpha/2}(0,T)}^2,
  \]
  by the Cauchy-Schwarz inequality we obtain
  \[
    \dual{\D_{0+}^\alpha y_0, y}_{H^{\alpha/2}(0,T)} =
    \frac{y_0}{\Gamma(1-\alpha)} \dual{t^{-\alpha}, y}_{(0,T)}
    \leqslant C_{\alpha,\theta,T} \snm{y_0}
    \snm{y}_{H^{(1-\theta)\alpha/2}(0,T)}.
  \]
  Since
  \[
    \snm{y}_{H^{(1-\theta)\alpha/2}(0,T)} \leqslant
    \nm{y}_{L^2(0,T)}^\theta
    \snm{y}_{H^{\alpha/2}(0,T)}^{1-\theta},
  \]
  it follows that
  \begin{equation}
    \label{eq:703}
    \begin{aligned}
      \dual{\D_{0+}^\alpha y_0, y}_{H^{\alpha/2}(0,T)} &
      \leqslant C_{\alpha,\theta,T}  \snm{y_0}
      \nm{y}_{L^2(0,T)}^\theta
      \snm{y}_{H^{\alpha/2}(0,T)}^{1-\theta} \\
      & \leqslant C_{\alpha,\theta,T} \snm{y_0}
      \lambda^{-\theta/2} \left( \lambda^{1/2}\nm{y}_{L^2(0,T)} \right)^\theta
      \snm{y}_{H^{\alpha/2}(0,T)}^{1-\theta} \\
      & \leqslant
      C_{\alpha,\theta,T} \snm{y_0} \lambda^{-\theta/2}
      \left(
        \snm{y}_{H^{\alpha/2}(0,T)} + \lambda^{1/2} \nm{y}_{L^2(0,T)}
      \right).
    \end{aligned}
  \end{equation}
   In addition, inserting $ z=y $ into
  \cref{eq:frac_ode_weak} yields
  \begin{align*}
    & \snm{y}_{H^{\alpha/2}(0,T)}^2 + \lambda \nm{y}_{L^2(0,T)}^2 \\
    \leqslant{} &
    C_\alpha \left(
      \lambda^{-1} \nm{g}_{L^2(0,T)}^2 +
      \dual{\D_{0+}^\alpha y_0, y}_{H^{\alpha/2}(0,T)}
    \right).
  \end{align*}
  Consequently, inserting \cref{eq:703} into the above inequality and applying
  the Young's inequality with $ \epsilon $, we obtain \cref{eq:regu_ode_II-1}.

  Secondly, let us prove that
  \begin{equation}
    \label{eq:regu_ode_II-2}
    \nm{y}_{H^\alpha(0,T)}^2 \leqslant C_{\alpha,\theta,T}
    \left(
      \nm{g}_{L^2(0,T)}^2 + \lambda^{1-\theta} \snm{y_0}^2
    \right).
  \end{equation}
  Multiplying both sides of \cref{eq:frac_ode} by $ \D_{0+}^\alpha(y-y_0) $ and
  integrating over $ (0,T) $, we obtain
  \[
\begin{aligned}
{}&    \nm{\D_{0+}^\alpha (y-y_0)}_{L^2(0,T)}^2 +
\lambda \snm{y}_{H^{\alpha/2}(0,T)}^2\\ 
\leqslant{}&
C_{\alpha,T} \left(
\nm{g}_{L^2(0,T)}^2 +
\lambda \dual{\D_{0+}^\alpha y_0,y}_{H^{\alpha/2}(0,T)}
\right),
\end{aligned}
  \]
  so that from \cref{eq:703,eq:regu_ode_II-1} it follows that
  \begin{align*}
    & \nm{\D_{0+}^\alpha (y-y_0)}_{L^2(0,T)}^2 +
    \lambda \snm{y}_{H^{\alpha/2}(0,T)}^2 \\
    \leqslant{} &
    C_{\alpha,\theta,T} \left(
      \nm{g}_{L^2(0,T)}^2 + \snm{y_0} \lambda^{1-\theta/2}
      \left(
        \lambda^{-1/2} \nm{g}_{L^2(0,T)} + \lambda^{-\theta/2} \snm{y_0}
      \right)
    \right) \\
    \leqslant{} &
    C_{\alpha,\theta,T} \left(
      \nm{g}_{L^2(0,T)}^2 + \snm{y_0} \lambda^{1/2-\theta/2}
        \nm{g}_{L^2(0,T)} + \lambda^{1-\theta} \snm{y_0}^2
    \right) \\
    \leqslant{} &
    C_{\alpha,\theta,T} \left(
      \nm{g}_{L^2(0,T)}^2 + \lambda^{1-\theta} \snm{y_0}^2
    \right).
  \end{align*}
  Therefore, combing \cref{eq:frac_ode,eq:xy-1} yields \cref{eq:regu_ode_II-2}.

  Finally, using the same technique as that used to derive
  \cref{eq:regu_ode_I-34}, by \cref{eq:regu_ode_II-1,eq:regu_ode_II-2} we
  conclude that
  \begin{align*}
    & \nm{y}_{H^{\alpha(1+\theta)/2}(0,T)}^2 +
    \lambda \nm{y}_{H^{\alpha\theta/2}(0,T)}^2 \\
    \leqslant{} &
    C_{\alpha,\theta,T} \left(
      \lambda^{\theta-1} \nm{g}_{L^2(0,T)}^2 + \snm{y_0}^2
    \right),
  \end{align*}
  which, together with \cref{eq:regu_ode_II-1,eq:regu_ode_II-2}, yields
  inequality \cref{eq:regu_ode_II}. This theorem is thus proved.
\end{proof}

\begin{lem}
  \label{lem:regu_ode_III}
  Assume that $ 1/2 < \alpha < 1 $. If $ y_0 = 0 $ and $ g \in H^{1-\alpha}(0,T)
  $, then
  \begin{equation}
    \label{eq:regu_ode_III}
    \nm{y}_{H^1(0,T)} + \lambda^{1/2} \nm{y}_{H^{1-\alpha/2}(0,T)} +
    \lambda \nm{y}_{L^2(0,T)} \leqslant
    C_{\alpha,T} \nm{g}_{H^{1-\alpha}(0,T)}.
  \end{equation}
\end{lem}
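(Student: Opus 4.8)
The plan is to reduce \cref{eq:regu_ode_III} to the $L^2$-source estimate already contained in \cref{lem:regu_ode_II}, exploiting that $\I_{0+}^{1-\alpha}$ raises Sobolev regularity by exactly $1-\alpha$ while leaving the powers of $\lambda$ untouched. The structural observation that motivates everything is that the target inequality is precisely the $L^2$-source estimate $\nm{Y}_{H^\alpha(0,T)}+\lambda^{1/2}\snm{Y}_{H^{\alpha/2}(0,T)}+\lambda\nm{Y}_{L^2(0,T)}\le C\nm{G}_{L^2(0,T)}$ with the two leading indices $\alpha$ and $\alpha/2$ shifted upward by $1-\alpha$ (to $1$ and $1-\alpha/2$), the weights $1,\lambda^{1/2},\lambda$ and the $L^2$ term being unchanged. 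This is exactly the footprint of an application of $\I_{0+}^{1-\alpha}$.

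First I would represent the datum. Since $1/2<\alpha<1$ we have $0<1-\alpha<1/2$, and on this subcritical range $\I_{0+}^{1-\alpha}$ is invertible between $L^2(0,T)$ and $H^{1-\alpha}(0,T)$ (no boundary correction is needed, as there is no trace below index $1/2$). Hence I would write
\[
g=\I_{0+}^{1-\alpha}G,\qquad G:=\D_{0+}^{1-\alpha}g\in L^2(0,T),\qquad \nm{G}_{L^2(0,T)}\le C_{\alpha,T}\nm{g}_{H^{1-\alpha}(0,T)}.
\]
Establishing this representation, with its $L^2$ bound, is the step I expect to be the main obstacle, because it is the place where the hypothesis $\alpha>1/2$ enters decisively; I would justify it from the inversion and mapping properties of the fractional operators recorded in \cref{lem:regu}.

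Next I would introduce the auxiliary solution $Y\in H^{\alpha/2}(0,T)$ of \cref{eq:frac_ode_weak} with source $G$ and vanishing initial value, so that $\D_{0+}^\alpha Y=G-\lambda Y$ and, as in the proof of \cref{lem:regu_ode_II}, $Y=\I_{0+}^\alpha(G-\lambda Y)$. Using the semigroup identity $\I_{0+}^{1-\alpha}\I_{0+}^{\alpha}=\I_{0+}$ I would then compute
\[
\I_{0+}^{1-\alpha}Y=\I_{0+}(G-\lambda Y)=\I_{0+}^\alpha\!\left(\I_{0+}^{1-\alpha}G-\lambda\,\I_{0+}^{1-\alpha}Y\right)=\I_{0+}^\alpha\!\left(g-\lambda\,\I_{0+}^{1-\alpha}Y\right),
\]
so that $v:=\I_{0+}^{1-\alpha}Y$ satisfies the same integral equation $v=\I_{0+}^\alpha(g-\lambda v)$ as $y$ does (recall $y_0=0$). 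By the uniqueness of the weak solution this forces $y=\I_{0+}^{1-\alpha}Y$.

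Finally I would combine the two ingredients. Applying \cref{lem:regu_ode_II} with $\theta=0$ and $y_0=0$ to $Y$ yields $\nm{Y}_{H^\alpha(0,T)}+\lambda^{1/2}\snm{Y}_{H^{\alpha/2}(0,T)}+\lambda\nm{Y}_{L^2(0,T)}\le C_{\alpha,T}\nm{G}_{L^2(0,T)}$. Since $\I_{0+}^{1-\alpha}$ maps $H^s(0,T)$ boundedly into $H^{s+1-\alpha}(0,T)$ (again by \cref{lem:regu}), applying it to $Y$ with $s=\alpha,\ \alpha/2,\ 0$ transfers these three bounds to $\nm{y}_{H^1(0,T)}$, $\lambda^{1/2}\nm{y}_{H^{1-\alpha/2}(0,T)}$ and $\lambda\nm{y}_{L^2(0,T)}$ respectively, the lower-order $\nm{Y}_{L^2}$ contributions being absorbed thanks to the favourable powers of $\lambda$ (here $\lambda>1$). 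Summing the three and invoking $\nm{G}_{L^2(0,T)}\le C_{\alpha,T}\nm{g}_{H^{1-\alpha}(0,T)}$ then gives \cref{eq:regu_ode_III}.
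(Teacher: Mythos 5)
Your route is genuinely different from the paper's. The paper works directly with $y$: it shows $y\in H^1(0,T)$ with $y(0)=0$, differentiates the equation to obtain $y'=\D_{0+}^{1-\alpha}(g-\lambda y)$, tests with $y'$, and produces the middle term of \cref{eq:regu_ode_III} from the coercivity bound $\dual{\D_{0+}^{1-\alpha}y,y'}_{(0,T)}\geqslant C_{\alpha,T}\nm{y}_{H^{1-\alpha/2}(0,T)}^2$ supplied by \cref{lem:key,lem:xy}, finishing with \cref{lem:regu_ode_II} only for the $\lambda\nm{y}_{L^2(0,T)}$ term. You instead factor the datum, $g=\I_{0+}^{1-\alpha}G$, solve the auxiliary problem with the $L^2$ source $G$, identify $y=\I_{0+}^{1-\alpha}Y$, and push the $\theta=0$, $y_0=0$ case of \cref{lem:regu_ode_II} (admissible since $1/\alpha-1>0$) through the smoothing operator. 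The identification step (semigroup identity plus uniqueness of the solution of the Volterra equation) and the application of \cref{lem:regu_ode_II} are sound; if completed, your argument trades the paper's energy computation on the differentiated equation for a reuse of the previous lemma, which is an attractive structural simplification.

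However, two steps need repair. The main one is the step you yourself flag as the chief obstacle and then leave unproved: the representation $g=\I_{0+}^{1-\alpha}G$ with $G=\D_{0+}^{1-\alpha}g\in L^2(0,T)$. You attribute it to \cref{lem:regu}, but that lemma contains only forward mapping bounds for $\I_{0+}^\gamma$ and no inversion or surjectivity statement. The norm bound $\nm{G}_{L^2(0,T)}\leqslant\snm{g}_{H^{1-\alpha}(0,T)}$ does follow from the first inequality of \cref{lem:coer}, but the identity $\I_{0+}^{1-\alpha}\D_{0+}^{1-\alpha}g=g$, equivalently the inclusion $H^{1-\alpha}(0,T)\subset\I_{0+}^{1-\alpha}\bigl(L^2(0,T)\bigr)$, is a genuine theorem (true precisely because $1-\alpha<1/2$) and must be proved or cited: for instance, verify it for smooth $g$, where $\D_{0+}^{1-\alpha}g=\I_{0+}^{\alpha}g'+g(0)t^{\alpha-1}/\Gamma(\alpha)$ and applying $\I_{0+}^{1-\alpha}$ reassembles $g$, then pass to the limit using density, the $L^2(0,T)$-boundedness of $\I_{0+}^{1-\alpha}$, and the boundedness of extension by zero on $H^{1-\alpha}(0,T)$ for the index $1-\alpha<1/2$; alternatively invoke the classical characterization of $\I_{0+}^{\gamma}(L^2)$ for $0<\gamma<1/2$ from Samko--Kilbas--Marichev. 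The second, smaller issue: you assert that $\I_{0+}^{1-\alpha}$ maps $H^{s}(0,T)$ boundedly into $H^{s+1-\alpha}(0,T)$ and apply this with $s=\alpha$; as stated this is false --- for $Y\equiv 1$ one gets $\I_{0+}^{1-\alpha}Y\sim t^{1-\alpha}$, whose derivative behaves like $t^{-\alpha}\notin L^2(0,T)$ exactly because $\alpha>1/2$. The correct tool is the second estimate \cref{eq:regu-2} of \cref{lem:regu}, which requires $Y(0)=0$; this does hold for your auxiliary solution, since $Y=\I_{0+}^\alpha(G-\lambda Y)$ with $\alpha>1/2$ forces $Y(0)=0$ (the same computation as in the paper's proof), but the hypothesis must be stated and checked rather than absorbed into a blanket mapping claim.
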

\begin{proof}
  Let us first prove that
  \begin{equation}
    \label{eq:regu_ode_III-1}
    y' = \D_{0+}^{1-\alpha} (g - \lambda y).
  \end{equation}
  Since we have already proved
  \[
    y = \I_{0+}^\alpha (g-\lambda y)
  \]
  in the proof of \cref{lem:regu_ode_II}, by \cref{lem:regu} we obtain $ y \in
  H^1(0,T) $. Moreover, because
  \[
    \snm{ \I_{0+}^\alpha(g-\lambda y)(s) }
    \leqslant \frac{s^{\alpha-1/2}}{\Gamma(\alpha) \sqrt{2\alpha-1}}
    \nm{g-\lambda y}_{L^2(0,s)}, \quad 0 < s < T,
  \]
  we have
  \[
    \lim_{s \to 0+}\I_{0+}^\alpha(g-\lambda y)(s) = 0.
  \]
  Consequently, we obtain $ y(0) = 0 $ and hence
  \[
    \D_{0+}^\alpha y = \D_{0+}^\alpha \I_{0+} y' = \I_{0+}^{1-\alpha} y',
  \]
  which, together with \cref{eq:frac_ode}, yields
  \[
    \I_{0+}^{1-\alpha} y' = g - \lambda y.
  \]
  Therefore,
  \[
    y' = \D \I_{0+} y' = \D \I_{0+}^\alpha \I_{0+}^{1-\alpha} y' =
    \D_{0+}^{1-\alpha} \I_{0+}^{1-\alpha} y' =
    \D_{0+}^{1-\alpha} (g-\lambda y).
  \]
  This proves equality \cref{eq:regu_ode_III-1}.

  Then, let us prove \cref{eq:regu_ode_III}. Multiplying both sides of
  \cref{eq:regu_ode_III-1} by $ y' $ and integrating over $ (0,T) $ yields
  \[
    \nm{y'}_{L^2(0,T)}^2 + \lambda \dual{\D_{0+}^{1-\alpha} y, y'}_{(0,T)} =
    \dual{\D_{0+}^{1-\alpha} g, y'}_{(0,T)},
  \]
  so that
  \[
    \nm{y'}_{L^2(0,T)}^2 + \lambda \dual{\D_{0+}^{1-\alpha} y, y'}_{(0,T)}
    \leqslant C_{\alpha,T} \nm{g}_{H^{1-\alpha}(0,T)}^2,
  \]
  by the Cauchy-Schwarz inequality, \cref{lem:coer} and the Young's inequality
  with $ \epsilon $. Additionally, using the fact that $ y \in H^1(0,T) $ with
  $ y(0) = 0 $ gives
  \[
    \D_{0+}^{1-\alpha} y = \D \I_{0+}^\alpha y =
    \I_{0+}^\alpha y',
  \]
  so that
  \[
    \dual{\D_{0+}^{1-\alpha}y, y'}_{(0,T)}
    \geqslant C_{\alpha,T} \nm{y}_{H^{1-\alpha/2}(0,T)}^2,
  \]
  by \cref{lem:key,lem:xy}. Therefore,
  \[
    \nm{y'}_{L^2(0,T)}^2 + \lambda \nm{y}_{H^{1-\alpha/2}(0,T)}^2
    \leqslant C_{\alpha,T} \nm{g}_{H^{1-\alpha}(0,T)}^2,
  \]
  and hence, as \cref{lem:regu_ode_II} implies
  \[
    \lambda \nm{y}_{L^2(0,T)} \leqslant C_{\alpha,T} \nm{g}_{L^2(0,T)},
  \]
  we readily obtain \cref{eq:regu_ode_III}. This completes the proof.
\end{proof}
It is clear that we can represent $ u $ in the following form
\[
u(t) = \sum_{i=0}^\infty y_i(t) \phi_i, \quad 0 < t < T,
\]
where $ y_i $ solves problem \cref{eq:frac_ode_weak} with $ \lambda $, $g$ and $y_0$ replaced by $
\lambda_i $, $f_i$ and $u_{0,i}$, respectively. Here, note that $f_i$ and $ u_{0,i} $ are the coordinates of $f$ and $u_0$ respectively under the orthonormal basis $\{\phi_i:i\in\mathbb N\}$. Therefore, by the above three lemmas we readily conclude the following
regularity estimates for problem \cref{eq:weak_form}.
\begin{thm}
  \label{thm:regu_pde_I}
  Assume that $ 0 < \alpha < 1/2 $. If $ f
  \in L^2(0,T;H^{-\beta}(\Omega)) $ and $ u_0 \in H^{-\beta}(\Omega) $ with $ 0 \leqslant \beta < 1 $, then
  \begin{equation*}
    \begin{aligned}
      & \snm{u}_{H^{\alpha(1-\beta/2)}(0,t;L^2(\Omega))} +
      \snm{y}_{H^{\alpha/2}(0,t; \dot H^{1-\beta}(\Omega))} +
      \snm{u}_{H^{\alpha(1-\beta)/2}(0,t;\dot H^1(\Omega))} \\
      & {} + \nm{u}_{L^2(0,t;\dot H^{2-\beta}(\Omega))}
      \leqslant C_{\alpha,\Omega} \left(
        \nm{f}_{L^2(0,t;H^{-\beta}(\Omega))} +
        t^{1/2-\alpha} \nm{u_0}_{H^{-\beta}(\Omega)}
      \right)
    \end{aligned}
  \end{equation*}
  for all $ 0 < t < T $.
\end{thm}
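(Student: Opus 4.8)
The plan is to exploit the spectral decomposition $u(t)=\sum_{i}y_i(t)\phi_i$ established just above the statement, in which each coordinate $y_i$ solves the scalar problem \cref{eq:frac_ode_weak} with data $(\lambda_i,f_i,u_{0,i})$, and then to apply \cref{lem:regu_ode_I} mode by mode. The bridge between the scalar estimates and the vector-valued norms in the statement is the Parseval-type identity
\[
  \snm{u}_{H^\gamma(0,t;\dot H^s(\Omega))}^2 = \sum_{i} \lambda_i^{s}\,\snm{y_i}_{H^\gamma(0,t)}^2,
\]
which follows from the coordinatewise definition of $\snm{\cdot}_{H^\gamma(0,t;X)}$ upon taking the orthonormal basis $\{\lambda_i^{-s/2}\phi_i\}$ of $\dot H^s(\Omega)$, together with $\nm{f}_{L^2(0,t;H^{-\beta}(\Omega))}^2=\sum_i\lambda_i^{-\beta}\nm{f_i}_{L^2(0,t)}^2$ and $\nm{u_0}_{H^{-\beta}(\Omega)}^2=\sum_i\lambda_i^{-\beta}\snm{u_{0,i}}^2$. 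Since $0<\alpha<1/2$, every temporal order occurring here, namely $\alpha(1-\beta/2)$, $\alpha/2$, and $\alpha(1-\beta)/2$, lies in $(0,1/2)$, so the coordinatewise seminorm definition is legitimate.

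The key device for producing the correct spectral weights is a rescaling by $\lambda_i^{-\beta}$. First I would square the estimate of \cref{lem:regu_ode_I} for the $i$-th mode; as all terms are nonnegative, the sum of the squares of the three left-hand terms is dominated by the square of the right-hand side, giving
\[
  \lambda_i^{\beta}\snm{y_i}_{H^{\alpha(1-\beta/2)}(0,t)}^2 + \lambda_i^{1+\beta}\snm{y_i}_{H^{(1-\beta)\alpha/2}(0,t)}^2 + \lambda_i^{2}\nm{y_i}_{L^2(0,t)}^2 \leqslant C_\alpha\left(\nm{f_i}_{L^2(0,t)}^2 + t^{1-2\alpha}\snm{u_{0,i}}^2\right).
\]
Multiplying through by $\lambda_i^{-\beta}$ turns the left-hand weights into $\lambda_i^{0}$, $\lambda_i^{1}$, $\lambda_i^{2-\beta}$ and the right-hand side into its $\lambda_i^{-\beta}$-weighted form; summing over $i$ and invoking the identities above then yields exactly the first, third, and fourth terms of the theorem, all controlled by $\nm{f}_{L^2(0,t;H^{-\beta}(\Omega))}^2 + t^{1-2\alpha}\nm{u_0}_{H^{-\beta}(\Omega)}^2$. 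Crucially, because the constant $C_\alpha$ in \cref{lem:regu_ode_I} does not depend on $\lambda$, it survives the summation unchanged.

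The remaining (second) term sits at temporal order $\alpha/2$, which the general-$\beta$ form of \cref{lem:regu_ode_I} does not generate; I would instead apply the lemma with $\beta=0$, retain its middle contribution $\lambda_i^{1/2}\snm{y_i}_{H^{\alpha/2}(0,t)}$, square it, multiply again by $\lambda_i^{-\beta}$, and sum to bound $\snm{u}_{H^{\alpha/2}(0,t;\dot H^{1-\beta}(\Omega))}^2 = \sum_i \lambda_i^{1-\beta}\snm{y_i}_{H^{\alpha/2}(0,t)}^2$ by the same right-hand side. Adding the four pieces and taking square roots gives the claim. The main obstacle is precisely this weight-matching: the scalar lemma carries its spectral weights on the left and none on the right, whereas the theorem asks for $\dot H$-regularity on the left and $H^{-\beta}$-data on the right, and the uniform $\lambda_i^{-\beta}$ rescaling is what reconciles the two. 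A minor point to keep in mind is that \cref{lem:regu_ode_I} requires $\lambda>1$, so the finitely many eigenvalues not exceeding $1$ (if any) must be absorbed into the $\Omega$-dependent constant, which is harmless.
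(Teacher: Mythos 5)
Your proposal is correct and follows essentially the same route as the paper: the paper's own proof consists precisely of the spectral representation $u(t)=\sum_i y_i(t)\phi_i$ followed by the remark that the estimate follows "readily" from \cref{lem:regu_ode_I}, and your mode-by-mode application of that lemma (with the $\lambda_i^{-\beta}$ rescaling, the extra application at $\beta=0$ for the $H^{\alpha/2}(0,t;\dot H^{1-\beta}(\Omega))$ term, and the absorption of the finitely many eigenvalues $\lambda_i\leqslant 1$ into the $\Omega$-dependent constant) is exactly the detail the paper leaves implicit.
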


\begin{thm}
  \label{thm:regu_pde_II}
  Assume that $ 1/2 \leqslant \alpha < 1 $. If $ f
  \in L^2(0,T;H^{-\beta}(\Omega)) $ with $ 2 - 1/\alpha < \beta < 1 $ and $ u_0 \in L^2(\Omega) $, then
  \begin{equation*}
    \begin{aligned}
      & \nm{u}_{H^{\alpha(1-\beta/2)}(0,T;L^2(\Omega))} +
      \snm{u}_{H^{\alpha/2}(0,T; \dot H^{1-\beta}(\Omega))} +
      \nm{u}_{H^{\alpha(1-\beta)/2}(0,T; \dot H^1(\Omega))} \\
      & {} +
      \nm{u}_{L^2(0,T; \dot H^{2-\beta}(\Omega))} \leqslant
      C_{\alpha,\beta,T,\Omega} \left(
        \nm{f}_{L^2(0,T;H^{-\beta}(\Omega))} + \nm{u_0}_{L^2(\Omega)}
      \right).
    \end{aligned}
  \end{equation*}
  Moreover, if $ u_0 = 0 $ and $ f \in L^2(0,T; H^{-\beta}(\Omega)) $ with $ 0
  \leqslant \beta < 1 $, then the above estimate also holds.
\end{thm}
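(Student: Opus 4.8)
The plan is to reduce the estimate for \cref{eq:weak_form} to the scalar estimate of \cref{lem:regu_ode_II} through the eigenfunction expansion $u(t)=\sum_i y_i(t)\phi_i$ introduced just above the theorem. The essential bookkeeping is to write every norm on the left as a weighted $\ell^2$-sum over the modes. Since $\{\lambda_i^{-s/2}\phi_i\}_i$ is an orthonormal basis of $\dot H^s(\Omega)$, the $i$-th coordinate of $u$ in $\dot H^s(\Omega)$ is $\lambda_i^{s/2}y_i$, so the coordinatewise definitions of the vector-valued fractional Sobolev (semi)norms recorded in \cref{sec:pre} give the identity
\[
  \snm{u}_{H^\gamma(0,T;\dot H^s(\Omega))}^2 = \sum_i \lambda_i^s\,\snm{y_i}_{H^\gamma(0,T)}^2,
\]
and likewise for the full norm $\nm{\cdot}_{H^\gamma}$ and for $s=0$. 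With this in hand, each of the four terms on the left is a weighted sum of a corresponding scalar time-norm of $y_i$.

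Next I would make the change of parameter $\theta=1-\beta$. A direct substitution shows that the four scalar quantities
\[
  \snm{y}_{H^{\alpha(1+\theta)/2}},\quad \lambda^{\theta/2}\snm{y}_{H^{\alpha/2}},\quad \lambda^{1/2}\nm{y}_{H^{\alpha\theta/2}},\quad \lambda^{(1+\theta)/2}\nm{y}_{L^2}
\]
controlled in \cref{lem:regu_ode_II} are exactly the per-mode versions (after inserting the weight $\lambda_i^{s/2}$ dictated by the identity above) of $\nm{u}_{H^{\alpha(1-\beta/2)}(0,T;L^2)}$, $\snm{u}_{H^{\alpha/2}(0,T;\dot H^{1-\beta})}$, $\nm{u}_{H^{\alpha(1-\beta)/2}(0,T;\dot H^1)}$ and $\nm{u}_{L^2(0,T;\dot H^{2-\beta})}$, respectively. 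Crucially, the admissibility condition $0\leqslant\theta<1/\alpha-1$ of \cref{lem:regu_ode_II} becomes, under $\theta=1-\beta$, precisely the hypothesis $2-1/\alpha<\beta<1$. Applying \cref{lem:regu_ode_II} with $\lambda=\lambda_i$, $g=f_i$, $y_0=u_{0,i}$ and $\theta=1-\beta$ then bounds each of the four weighted per-mode quantities by $C_{\alpha,\beta,T}\big(\lambda_i^{-\beta/2}\nm{f_i}_{L^2(0,T)}+\snm{u_{0,i}}\big)$. I would square, sum over $i$, use $(a+b)^2\leqslant 2a^2+2b^2$, and invoke $\sum_i\snm{u_{0,i}}^2=\nm{u_0}_{L^2(\Omega)}^2$ together with $\sum_i\lambda_i^{-\beta}\nm{f_i}_{L^2(0,T)}^2\leqslant C\nm{f}_{L^2(0,T;H^{-\beta}(\Omega))}^2$, the latter being the dual form of the norm equivalence $\dot H^\beta(\Omega)\cong H_0^\beta(\Omega)$ from \cref{sec:pre} (with the usual care at $\beta=1/2$). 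Taking square roots and summing the four contributions yields the claimed inequality.

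For the ``Moreover'' assertion ($u_0=0$, arbitrary $0\leqslant\beta<1$) the same strategy applies, but the parameter restriction must be lifted, and this is the only genuinely delicate point. Here I would revisit the proof of \cref{lem:regu_ode_II}: the constraint $\theta<1/\alpha-1$ enters solely through the singular initial-data pairing $\dual{\D_{0+}^\alpha y_0,y}$, whose estimation required integrability of $t^{-\alpha(1+\theta)}$ near the origin. When $y_0=u_{0,i}=0$ this term disappears, so testing \cref{eq:frac_ode} against $y$ and against $\D_{0+}^\alpha y$ gives the unconditional bounds $\snm{y}_{H^{\alpha/2}}^2+\lambda\nm{y}_{L^2}^2\leqslant C\lambda^{-1}\nm{g}_{L^2}^2$ and $\nm{y}_{H^\alpha}^2\leqslant C\nm{g}_{L^2}^2$ for every $\alpha\in[1/2,1)$. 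Interpolating between these anchor estimates, exactly as in the derivation of \cref{eq:regu_ode_I-34}, reproduces the four scalar quantities above with $\theta=1-\beta$ for all $\theta\in[0,1]$, hence for all $\beta\in[0,1)$. Summing over the modes as before, now with the $u_0$-contribution absent, completes the proof. I expect the norm bookkeeping and the Young/interpolation steps to be mechanical; the point demanding attention is tracking which hypothesis is really used, namely confirming that the restriction on $\beta$ in the first assertion is an artifact of the initial datum and evaporates once $u_0=0$.
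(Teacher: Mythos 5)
Your proposal is correct and follows exactly the route the paper intends: the paper's entire proof is the observation that $u=\sum_i y_i\phi_i$ plus the remark that the theorem follows ``readily'' from \cref{lem:regu_ode_II}, which is precisely your mode-by-mode application of that lemma with $\theta=1-\beta$, $\lambda=\lambda_i$, $g=f_i$, $y_0=u_{0,i}$, followed by weighted $\ell^2$ summation and the duality $\sum_i\lambda_i^{-\beta}\nm{f_i}_{L^2(0,T)}^2\lesssim\nm{f}_{L^2(0,T;H^{-\beta}(\Omega))}^2$. Your treatment of the ``Moreover'' clause---noting that the constraint $\theta<1/\alpha-1$ in \cref{lem:regu_ode_II} arises solely from the singular pairing $\dual{\D_{0+}^\alpha y_0,y}$ and that, when $u_0=0$, testing with $y$ and $\D_{0+}^\alpha y$ plus interpolation gives the estimate for all $\theta\in[0,1]$---is exactly the (unstated) justification the paper relies on for that part of the statement.
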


\begin{thm}
  Assume that $ 1/2 < \alpha < 1 $. If $ u_0 = 0 $ and $ f \in H^{1-\alpha}(0,T;
  L^2(\Omega)) $, then
  \begin{equation*}
    \begin{aligned}
      & \nm{u}_{H^1(0,T; L^2(\Omega))} +
      \nm{u}_{H^{1-\alpha/2}(0,T; \dot H^1(\Omega))} +
      \nm{u}_{L^2(0,T; \dot H^2(\Omega))} \\
      \leqslant{} &
      C_{\alpha,T,\Omega}
      \nm{f}_{H^{1-\alpha}(0,T; L^2(\Omega))}.
    \end{aligned}
  \end{equation*}
\end{thm}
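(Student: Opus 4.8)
The plan is to diagonalize in the spatial eigenbasis and apply \cref{lem:regu_ode_III} mode by mode. Writing $ u(t) = \sum_{i=0}^\infty y_i(t)\phi_i $ as in the expansion recorded just before \cref{thm:regu_pde_I}, each coordinate $ y_i $ solves problem \cref{eq:frac_ode_weak} with $ \lambda $, $ g $, $ y_0 $ replaced by $ \lambda_i $, $ f_i := (f,\phi_i)_{L^2(\Omega)} $, $ u_{0,i} $. Since $ u_0 = 0 $, every mode has vanishing initial datum, and since $ f \in H^{1-\alpha}(0,T;L^2(\Omega)) $ we have $ f_i \in H^{1-\alpha}(0,T) $ for each $ i $ with $ \sum_i \nm{f_i}_{H^{1-\alpha}(0,T)}^2 < \infty $. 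Thus the hypotheses of \cref{lem:regu_ode_III} hold for each mode, and I would invoke it to obtain
\[
  \nm{y_i}_{H^1(0,T)} + \lambda_i^{1/2} \nm{y_i}_{H^{1-\alpha/2}(0,T)} + \lambda_i \nm{y_i}_{L^2(0,T)} \leqslant C_{\alpha,T}\, \nm{f_i}_{H^{1-\alpha}(0,T)}.
\]

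Next I would square this bound, use the elementary equivalence $ (a+b+c)^2 \simeq a^2+b^2+c^2 $, and sum over $ i $, which yields
\[
  \sum_i \nm{y_i}_{H^1(0,T)}^2 + \sum_i \lambda_i \nm{y_i}_{H^{1-\alpha/2}(0,T)}^2 + \sum_i \lambda_i^2 \nm{y_i}_{L^2(0,T)}^2 \leqslant C_{\alpha,T} \sum_i \nm{f_i}_{H^{1-\alpha}(0,T)}^2.
\]
It then remains to identify each sum with a vector-valued norm. For the right-hand side, since $ 0 < 1-\alpha < 1/2 $ the explicit seminorm characterization of \cref{sec:pre}, combined with $ \nm{\cdot}_{H^{1-\alpha}(0,T)}^2 = \nm{\cdot}_{L^2(0,T)}^2 + \snm{\cdot}_{H^{1-\alpha}(0,T)}^2 $ and Parseval in $ L^2(\Omega) $, gives $ \sum_i \nm{f_i}_{H^{1-\alpha}(0,T)}^2 = \nm{f}_{H^{1-\alpha}(0,T;L^2(\Omega))}^2 $. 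For the first and third left-hand terms, orthonormality of $ \{\phi_i\} $ in $ L^2(\Omega) $ and the definition of $ \nm{\cdot}_{\dot H^2(\Omega)} $ give $ \sum_i \nm{y_i}_{H^1(0,T)}^2 = \nm{u}_{H^1(0,T;L^2(\Omega))}^2 $ and $ \sum_i \lambda_i^2 \nm{y_i}_{L^2(0,T)}^2 = \nm{u}_{L^2(0,T;\dot H^2(\Omega))}^2 $.

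The step requiring the most care is identifying the middle sum with $ \nm{u}_{H^{1-\alpha/2}(0,T;\dot H^1(\Omega))}^2 $, since the temporal order $ 1-\alpha/2 \in (1/2,3/4) $ lies outside the range $ (0,1/2) $ for which \cref{sec:pre} records an explicit seminorm. The resolution is that $ \{\lambda_i^{-1/2}\phi_i\} $ is an orthonormal basis of $ \dot H^1(\Omega) $, so that $ (u,\lambda_i^{-1/2}\phi_i)_{\dot H^1(\Omega)} = \lambda_i^{1/2} y_i $, and $ H^{1-\alpha/2}(0,T;\dot H^1(\Omega)) $, being the Hilbert tensor product of $ H^{1-\alpha/2}(0,T) $ with $ \dot H^1(\Omega) $, has a squared norm that decomposes orthogonally as $ \sum_i \lambda_i \nm{y_i}_{H^{1-\alpha/2}(0,T)}^2 $ for any fractional order; this is where I would record that the decomposition of the Hilbert-valued norm persists beyond the explicitly tabulated range. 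A minor bookkeeping point is that \cref{eq:frac_ode_weak} is posed for $ \lambda > 1 $, whereas at most finitely many eigenvalues $ \lambda_i $ may violate this; these contribute finitely many terms, each controlled directly by the well-posedness of the scalar fractional ODE, and are absorbed into $ C_{\alpha,T,\Omega} $. Combining the four identifications delivers the asserted estimate.
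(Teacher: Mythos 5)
Your proposal is correct and is essentially the paper's own argument: the paper obtains this theorem (without writing out details) precisely by expanding $u(t)=\sum_{i} y_i(t)\phi_i$ in the spatial eigenbasis, applying \cref{lem:regu_ode_III} to each mode with $\lambda_i$, $f_i$, $y_0=0$, and summing the squared estimates. Your extra care in identifying $\sum_i \lambda_i \nm{y_i}_{H^{1-\alpha/2}(0,T)}^2$ with $\nm{u}_{H^{1-\alpha/2}(0,T;\dot H^1(\Omega))}^2$ and in handling the finitely many eigenvalues $\lambda_i\leqslant 1$ simply fills in bookkeeping the paper leaves implicit.
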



\section{Convergence}
\label{sec:conv}
We assume that $ u $ and $ U $ are respectively the solutions to problems
\cref{eq:weak_form} and \cref{eq:algor}, and by $ a \lesssim b $ we mean that there
exists a generic positive constant $ C $, independent of $ h $, $ \tau $ and $ u $,
such that $ a \leqslant C b $. The main task of this section is to prove the
following a priori error estimates.
\begin{thm}
  \label{thm:conv_I}
  Assume that $ 0 < \alpha < 1/2 $ and $ 0 \leqslant \beta < 1 $. If $ u_0 \in
  H^{-\beta}(\Omega) $ and $ f \in L^2(0,T; H^{-\beta}(\Omega)) $, then
  \begin{align}
    & \nm{u-U}_{L^2(0,T;\dot H^1(\Omega))} \notag \\
    \lesssim{} &
    \left( h^{1-\beta} + \tau^{\alpha(1-\beta)/2} \right)
    \left(
      \nm{f}_{L^2(0,T;H^{-\beta}(\Omega))} +
      \nm{u_0}_{H^{-\beta}(\Omega)}
    \right),
    \label{eq:conv_I_H1} \\
    & \nm{u-U}_{L^2( 0,T;L^2(\Omega) )} \notag \\
    \lesssim{} &
    \left( h^{2-\beta} + \tau^{\alpha(1-\beta/2)} \right)
    \left(
      \nm{f}_{L^2(0,T;H^{-\beta}(\Omega))} +
      \nm{u_0}_{H^{-\beta}(\Omega)}
    \right).
    \label{eq:conv_I_L2}
  \end{align}
\end{thm}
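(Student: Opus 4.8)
The plan is to carry out a conforming Galerkin analysis built on the coercivity and continuity of the bilinear form
\[
B(w,v) := \dual{\D_{0+}^\alpha w, v}_{H^{\alpha/2}(0,T;L^2(\Omega))} + \dual{\nabla w, \nabla v}_{\Omega\times(0,T)},
\]
followed by a duality argument for the $L^2(0,T;L^2(\Omega))$ bound. Since $\mathcal M_{h,\tau} \subset W$, subtracting \cref{eq:algor} from \cref{eq:weak_form} yields the Galerkin orthogonality $B(u-U,V)=0$ for all $V \in \mathcal M_{h,\tau}$. \cref{lem:coer} gives $B(v,v) \gtrsim \nm{v}_W^2$, while the mapping properties of $\D_{0+}^\alpha$ between $H^{\alpha/2}(0,T;L^2(\Omega))$ and its dual give $B(w,v) \lesssim \nm{w}_W \nm{v}_W$. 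The Lax--Milgram/C\'ea argument then furnishes the quasi-optimality estimate $\nm{u-U}_W \lesssim \inf_{V\in\mathcal M_{h,\tau}} \nm{u-V}_W$, which already controls the left-hand side of \cref{eq:conv_I_H1} because $\nm{\cdot}_{L^2(0,T;\dot H^1(\Omega))} \leqslant \nm{\cdot}_W$.

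To bound the infimum I would take the tensor-product projection $\Pi := P_\tau R_h$, where $R_h : \dot H^1(\Omega) \to \mathcal S_h$ is the spatial Ritz projection and $P_\tau$ is the $L^2(0,T)$-projection onto temporally piecewise constants; these commute, so $u-\Pi u = (I-R_h)u + (I-P_\tau)R_h u$. For the spatial part I would use $\nm{(I-R_h)v}_{L^2(\Omega)} \lesssim h^{1-\beta}\nm{v}_{\dot H^{1-\beta}(\Omega)}$ and $\nm{(I-R_h)v}_{\dot H^1(\Omega)} \lesssim h^{1-\beta}\nm{v}_{\dot H^{2-\beta}(\Omega)}$, applied inside the $H^{\alpha/2}(0,T;L^2(\Omega))$ seminorm and the $L^2(0,T;\dot H^1(\Omega))$ norm respectively. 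For the temporal part I would use $\snm{(I-P_\tau)w}_{H^{\alpha/2}(0,T)} \lesssim \tau^{s-\alpha/2}\snm{w}_{H^s(0,T)}$ and $\nm{(I-P_\tau)w}_{L^2(0,T)} \lesssim \tau^{s}\snm{w}_{H^s(0,T)}$, together with the $\dot H^1(\Omega)$-stability of $R_h$. Feeding in the four regularity seminorms of \cref{thm:regu_pde_I} (with $s=\alpha(1-\beta/2)$ for the $L^2(\Omega)$-valued part and $s=\alpha(1-\beta)/2$ for the $\dot H^1(\Omega)$-valued part, both producing the exponent $\tau^{\alpha(1-\beta)/2}$) gives $\nm{u-\Pi u}_W \lesssim (h^{1-\beta}+\tau^{\alpha(1-\beta)/2})(\nm{f}_{L^2(0,T;H^{-\beta}(\Omega))}+\nm{u_0}_{H^{-\beta}(\Omega)})$, which is precisely \cref{eq:conv_I_H1}.

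For \cref{eq:conv_I_L2} I would run an Aubin--Nitsche duality argument. Setting $e := u-U$, I introduce the adjoint backward problem $B(v,w)=\dual{v,e}_{\Omega\times(0,T)}$ for all $v\in W$, whose leading operator is the right Riemann--Liouville derivative $\D_{T-}^\alpha$; by time reversal its solution obeys the regularity theory of \cref{thm:regu_pde_I} with data $e\in L^2(0,T;L^2(\Omega))$, that is, the case $\beta=0$. Testing with $v=e$ and invoking Galerkin orthogonality gives $\nm{e}_{L^2(0,T;L^2(\Omega))}^2 = B(e,w-W)$ for any $W\in\mathcal M_{h,\tau}$; choosing $W=\Pi w$ and reusing the approximation estimates above with $\beta=0$ yields $\nm{w-\Pi w}_W \lesssim (h+\tau^{\alpha/2})\nm{e}_{L^2(0,T;L^2(\Omega))}$, whence $\nm{e}_{L^2(0,T;L^2(\Omega))}\lesssim (h+\tau^{\alpha/2})\nm{e}_W$. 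Multiplying by the $W$-bound from \cref{eq:conv_I_H1} and absorbing the cross terms $h\,\tau^{\alpha(1-\beta)/2}$ and $\tau^{\alpha/2}h^{1-\beta}$ by Young's inequality into $h^{2-\beta}+\tau^{\alpha(1-\beta/2)}$ produces \cref{eq:conv_I_L2}.

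I anticipate two principal obstacles. First, the fractional-order temporal projection bound $\snm{(I-P_\tau)w}_{H^{\alpha/2}(0,T)} \lesssim \tau^{s-\alpha/2}\snm{w}_{H^s(0,T)}$ is delicate because the $H^{\alpha/2}(0,T)$ seminorm is nonlocal (defined through the Fourier transform) and does not split over the subintervals $I_j$; establishing it will require a dedicated lemma, presumably via an interpolation-space characterization combined with the $L^2$-stability and first-order $L^2$ approximation of $P_\tau$. Second, the duality step hinges on transferring the regularity theory of \cref{thm:regu_pde_I} to the backward adjoint problem governed by $\D_{T-}^\alpha$ and on verifying that Galerkin orthogonality survives in the adjoint pairing; the time-reversal symmetry that makes this work must be stated and justified rather than taken for granted.
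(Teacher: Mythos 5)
Your architecture coincides with the paper's: coercivity and continuity of the bilinear form via \cref{lem:coer} plus Galerkin orthogonality give quasi-optimality in $\nm{\cdot}_W$; a tensor-product interpolant (temporal averaging composed with a spatial operator) combined with the four seminorms of \cref{thm:regu_pde_I} gives \cref{eq:conv_I_H1}; and a backward-in-time duality problem with $\beta=0$ regularity, finished off by Young's inequality to absorb the cross terms $h\,\tau^{\alpha(1-\beta)/2}$ and $h^{1-\beta}\tau^{\alpha/2}$, gives \cref{eq:conv_I_L2}. The single substantive deviation is your choice of spatial operator: the paper uses the Cl\'ement quasi-interpolant $P_h$, whereas you use the Ritz projection $R_h$ --- and that is where your argument has a genuine gap in the nonsmooth case $\beta>0$.

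The estimate you rely on, $\nm{(I-R_h)v}_{L^2(\Omega)}\lesssim h^{1-\beta}\nm{v}_{\dot H^{1-\beta}(\Omega)}$, is not available: $R_h$ is defined through the Dirichlet form and therefore only acts on $\dot H^1(\Omega)$, while $\dot H^{1-\beta}(\Omega)$ is strictly larger, so the right-hand side involves a norm on which $R_h$ is not even defined. To make sense of such a bound one must extend $R_h$ to $L^2(\Omega)$, which requires the $L^2(\Omega)$-stability of the Ritz projection; that property is tied to the $H^1$-stability of the $L^2$-projection and does not follow from the paper's standing assumption of shape regularity alone (it needs mesh conditions such as quasi-uniformity). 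The same defect reappears, more quietly, in your temporal term: to bound $\snm{(I-P_\tau)R_hu}_{H^{\alpha/2}(0,T;L^2(\Omega))}$ by $\tau^{\alpha(1-\beta)/2}\snm{u}_{H^{\alpha(1-\beta/2)}(0,T;L^2(\Omega))}$ you must commute $R_h$ past an $L^2(\Omega)$-valued time seminorm, i.e.\ you again need $L^2(\Omega)$-stability of $R_h$, not the $\dot H^1(\Omega)$-stability you invoke; using $\dot H^1$-stability instead would force you onto $\snm{u}_{H^{\alpha(1-\beta/2)}(0,T;\dot H^1(\Omega))}$, a seminorm that \cref{thm:regu_pde_I} does not control. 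The Cl\'ement operator is precisely the repair: it is defined and stable on $L^2(\Omega)$ and satisfies $\nm{(I-P_h)v}_{H^s(\Omega)}\lesssim h^{\gamma-s}\nm{v}_{\dot H^\gamma(\Omega)}$ over the whole range $0\leqslant s\leqslant 1$, $s\leqslant\gamma\leqslant 2$, which is exactly what the fractional spatial regularity $\dot H^{1-\beta}(\Omega)$ demands. Your two anticipated obstacles, by contrast, are benign: the fractional-order bound for $I-P_\tau$ follows by interpolating its $L^2$-stability against its first-order $L^2$ rate (the paper simply quotes it), and the transfer of \cref{thm:regu_pde_I} to the $\D_{T-}^\alpha$ problem is handled in the paper exactly as you propose; note also that in the duality step your use of $R_h$ is harmless, since there only the $\beta=0$ estimates are needed.
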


\begin{thm}
  \label{thm:conv_II}
  Assume that $ 1/2 \leqslant \alpha < 1 $ and $ 2-1/\alpha<\beta\leqslant 1
  $. If $ u_0 \in L^2(\Omega) $ and $ f \in L^2(0,T; H^{-\beta}(\Omega)) $,
  then
  \begin{align*}
    & \nm{u-U}_{L^2(0,T;\dot H^1(\Omega))} \notag \\
    \lesssim{} &
    \left( h^{1-\beta} + \tau^{\alpha(1-\beta)/2} \right)
    \left(
      \nm{f}_{L^2(0,T;H^{-\beta}(\Omega))} +
      \nm{u_0}_{L^2(\Omega)}
    \right), \\
    & \nm{u-U}_{L^2(0,T;L^2(\Omega))} \notag \\
    \lesssim{} &
    \left( h^{2-\beta} + \tau^{\alpha(1-\beta/2)} \right)
    \left(
      \nm{f}_{L^2(0,T;H^{-\beta}(\Omega))} +
      \nm{u_0}_{L^2(\Omega)}
    \right).
  \end{align*}
  Moreover, if $ u_0 = 0 $ and $ f \in L^2(0,T;H^{-\beta}(\Omega)) $, then
  the above two estimates also hold for all $ 0 \leqslant \beta < 1 $.
\end{thm}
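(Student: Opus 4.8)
The plan is to combine a conforming Galerkin framework with the sharp regularity of \cref{thm:regu_pde_II}. Because $\alpha/2<1/2$, piecewise-constant-in-time $\mathcal S_h$-valued functions belong to $H^{\alpha/2}(0,T;L^2(\Omega))$, so $\mathcal M_{h,\tau}\subset W$ and testing \cref{eq:weak_form} against $V\in\mathcal M_{h,\tau}$ is legitimate; subtracting it from \cref{eq:algor} gives the Galerkin orthogonality $\dual{\D_{0+}^\alpha(u-U),V}_{H^{\alpha/2}(0,T;L^2(\Omega))}+\dual{\nabla(u-U),\nabla V}_{\Omega\times(0,T)}=0$ for all $V\in\mathcal M_{h,\tau}$. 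I would introduce the spatial Ritz projection $R_h$ onto $\mathcal S_h$ and the temporal $L^2(0,T)$-projection $P_\tau$ onto piecewise constants; these commute, so $\Pi:=R_hP_\tau=P_\tau R_h$ maps into $\mathcal M_{h,\tau}$. Splitting $u-U=\eta+\xi$ with $\eta:=u-\Pi u=(I-R_h)u+(I-P_\tau)R_hu$ and $\xi:=\Pi u-U\in\mathcal M_{h,\tau}$ reduces the task to projection estimates for $\eta$ plus a bound on the discrete remainder $\xi$.

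For the $\dot H^1$ estimate I would test the orthogonality with $V=\xi$. The key point is that the gradient cross term vanishes: $(I-R_h)u(t)$ is $\dot H^1$-orthogonal to $\mathcal S_h$ pointwise in $t$, while $(I-P_\tau)R_hu$ has zero mean on each $I_j$ against the piecewise-constant $\nabla\xi$, so $\dual{\nabla\eta,\nabla\xi}_{\Omega\times(0,T)}=0$. Bounding the left-hand side below by the coercivity of \cref{lem:coer} and the surviving term above by the continuity $\dual{\D_{0+}^\alpha\eta,\xi}_{H^{\alpha/2}(0,T;L^2(\Omega))}\lesssim\snm{\eta}_{H^{\alpha/2}(0,T;L^2(\Omega))}\snm{\xi}_{H^{\alpha/2}(0,T;L^2(\Omega))}$, I obtain $\nm{\xi}_W\lesssim\snm{\eta}_{H^{\alpha/2}(0,T;L^2(\Omega))}$. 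Feeding the Ritz estimates $\nm{(I-R_h)v}_{L^2(\Omega)}\lesssim h^{s}\nm{v}_{\dot H^s(\Omega)}$ ($s=1-\beta$) and $\nm{(I-R_h)v}_{\dot H^1(\Omega)}\lesssim h^{s-1}\nm{v}_{\dot H^s(\Omega)}$ ($s=2-\beta$), together with the temporal estimates $\nm{(I-P_\tau)v}_{L^2(0,T;X)}\lesssim\tau^{s}\snm{v}_{H^s(0,T;X)}$ and a fractional-in-time bound discussed below, into the norms of \cref{thm:regu_pde_II} then yields $\nm{\eta}_{L^2(0,T;\dot H^1(\Omega))}+\snm{\eta}_{H^{\alpha/2}(0,T;L^2(\Omega))}\lesssim(h^{1-\beta}+\tau^{\alpha(1-\beta)/2})(\nm{f}_{L^2(0,T;H^{-\beta}(\Omega))}+\nm{u_0}_{L^2(\Omega)})$, which is the claimed bound for $\nm{u-U}_{L^2(0,T;\dot H^1(\Omega))}$.

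For the $L^2(0,T;L^2(\Omega))$ estimate I would run an Aubin--Nitsche duality argument. With $e:=u-U$, let $w$ solve the backward adjoint problem driven by the right Riemann--Liouville operator $\D_{T-}^\alpha$ with source $e$; under the reflection $t\mapsto T-t$ this is a problem of the same type, so by \cref{thm:regu_pde_II} with $\beta=0$ it gains two spatial and $\alpha$ temporal derivatives, all controlled by $\nm{e}_{L^2(0,T;L^2(\Omega))}$. Using the fractional integration-by-parts identity $\dual{\D_{0+}^\alpha e,w}=\dual{e,\D_{T-}^\alpha w}$ I would write $\nm{e}_{L^2(0,T;L^2(\Omega))}^2=\dual{\D_{0+}^\alpha e,w}+\dual{\nabla e,\nabla w}$, insert $-\Pi w$ by Galerkin orthogonality, and estimate $\dual{\D_{0+}^\alpha e,w-\Pi w}+\dual{\nabla e,\nabla(w-\Pi w)}$ as a product of the already-bounded primal error factors and the dual approximation factors $h+\tau^{\alpha/2}$. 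Multiplying $h^{1-\beta}+\tau^{\alpha(1-\beta)/2}$ by $h+\tau^{\alpha/2}$, absorbing the mixed terms by Young's inequality, and cancelling one power of $\nm{e}$ produces $h^{2-\beta}+\tau^{\alpha(1-\beta/2)}$.

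The step I expect to be hardest is the temporal one: proving the fractional-order projection estimate $\snm{(I-P_\tau)v}_{H^{\alpha/2}(0,T;X)}\lesssim\tau^{\,s-\alpha/2}\snm{v}_{H^s(0,T;X)}$, since $(I-P_\tau)v$ is only piecewise smooth whereas the $H^{\alpha/2}$-seminorm is nonlocal; correctly handling the cross term $\dual{\D_{0+}^\alpha\eta,\xi}$ and verifying the continuity of the fractional form on $\mathcal M_{h,\tau}$ are the other delicate points. Finally, the \emph{moreover} assertion (for $u_0=0$ and all $0\leqslant\beta<1$) needs no new argument: the restriction $2-1/\alpha<\beta$ entered only through the hypotheses of \cref{thm:regu_pde_II}, whose second part removes it when $u_0=0$, so the identical reasoning applies.
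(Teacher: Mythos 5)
Your overall architecture --- Galerkin orthogonality plus an energy bound, then an Aubin--Nitsche duality argument with the adjoint problem driven by $\D_{T-}^\alpha$, both fed by \cref{thm:regu_pde_II}, with Young's inequality converting $(h+\tau^{\alpha/2})(h^{1-\beta}+\tau^{\alpha(1-\beta)/2})$ into $h^{2-\beta}+\tau^{\alpha(1-\beta/2)}$ --- is exactly the paper's (it proves \cref{thm:conv_I} this way and states that \cref{thm:conv_II} follows by the same argument), and your observation that the ``moreover'' part is free because the restriction $2-1/\alpha<\beta$ enters only through the regularity theorem is also the intended reading.

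The genuine gap is your choice of the Ritz projection $R_h$ as the spatial operator. The estimate you feed into the energy step, $\nm{(I-R_h)v}_{L^2(\Omega)}\lesssim h^{1-\beta}\nm{v}_{\dot H^{1-\beta}(\Omega)}$, is not a standard Ritz estimate: standard theory gives $\nm{(I-R_h)v}_{L^2(\Omega)}\lesssim h^{s}\nm{v}_{\dot H^{s}(\Omega)}$ only for $1\leqslant s\leqslant 2$. For $s=1-\beta<1$ the right-hand side involves a space on which $R_h$ is not even defined ($\dot H^{1-\beta}(\Omega)$ contains functions outside $H_0^1(\Omega)$ for any $\beta>0$), and the only route to such a bound is to prove $L^2(\Omega)$-stability of $R_h$ uniformly in $h$ and then interpolate --- a property that is known to require quasi-uniformity (or mild grading) of the mesh, whereas the paper assumes only shape regularity. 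The same unproved $L^2$-stability is used a second time when you pass from $\snm{(I-P_\tau)R_hu}_{H^{\alpha/2}(0,T;L^2(\Omega))}$ to $\snm{(I-P_\tau)u}_{H^{\alpha/2}(0,T;L^2(\Omega))}$: since the $H^{\alpha/2}$-seminorm is defined through the Fourier transform in time, pulling the fixed spatial operator $R_h$ out of it requires precisely a uniform $L^2(\Omega)\to L^2(\Omega)$ bound. Because \cref{thm:conv_II} is specifically about rough data ($\beta$ up to $1$, solution only in $\dot H^{1-\beta}$ in some of the norms), these are load-bearing steps, not technicalities. The paper avoids all of this by taking the spatial operator to be the Cl\'ement interpolant $P_h:L^2(\Omega)\to\mathcal S_h$, which on shape-regular meshes is $L^2$- and $H^1$-stable and satisfies $\nm{(I-P_h)v}_{H^\beta(\Omega)}\lesssim h^{\gamma-\beta}\nm{v}_{\dot H^\gamma(\Omega)}$ for all $0\leqslant\beta\leqslant1$ and $\beta\leqslant\gamma\leqslant2$, i.e.\ exactly the low-regularity range you need. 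Note also that the only dividend of the Ritz projection in your proof, the vanishing of $\dual{\nabla\eta,\nabla\xi}_{\Omega\times(0,T)}$, is dispensable: with the Cl\'ement operator one bounds $\nm{u-U}_W\lesssim\nm{u-P_\tau P_hu}_W$ directly from the coercivity and continuity in \cref{lem:coer} (C\'ea's lemma), as the paper does, and the rest of your argument then goes through unchanged.
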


\begin{thm}
  \label{thm:conv_III}
  Assume that $ 1/2 < \alpha < 1 $. If $ u_0 = 0 $ and $ f \in
  H^{1-\alpha}(0,T;L^2(\Omega)) $, then
  \begin{align*}
      \nm{u-U}_{L^2(0,T;L^2(\Omega))} & \lesssim
  \left( h^2 + \tau \right)
  \nm{f}_{H^{1-\alpha}(0,T;L^2(\Omega))},\\
    \nm{u-U}_{L^2(0,T;\dot H^1(\Omega))} & \lesssim
    \left( h + \tau^{1-\alpha/2} \right)
    \nm{f}_{H^{1-\alpha}(0,T;L^2(\Omega))}.
  \end{align*}
\end{thm}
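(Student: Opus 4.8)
The plan is to feed the sharp regularity for this data class into a carefully arranged error splitting, letting the Ritz projection and the temporal $L^2$-projection carry the argument. Since $u_0 = 0$ and $f \in H^{1-\alpha}(0,T;L^2(\Omega))$, the last regularity estimate of \cref{sec:regu} gives
\[
\nm{u}_{H^1(0,T;L^2(\Omega))} + \nm{u}_{H^{1-\alpha/2}(0,T;\dot H^1(\Omega))} + \nm{u}_{L^2(0,T;\dot H^2(\Omega))} \lesssim \nm{f}_{H^{1-\alpha}(0,T;L^2(\Omega))}.
\]
Let $R_h\colon \dot H^1(\Omega) \to \mathcal S_h$ be the Ritz projection, $\dual{\nabla R_h v, \nabla v_h}_\Omega = \dual{\nabla v, \nabla v_h}_\Omega$ for all $v_h \in \mathcal S_h$, and let $P_\tau$ be the $L^2(0,T)$-orthogonal projection onto temporally piecewise-constant functions. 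As $R_h$ acts only in space and $P_\tau$ only in time they commute; I set $\Pi := R_h P_\tau$ and split $u - U = \eta + \theta$ with $\eta := u - \Pi u$ and $\theta := \Pi u - U \in \mathcal M_{h,\tau}$. Standard approximation theory together with the regularity above yields $\nm{\eta}_{L^2(0,T;\dot H^1(\Omega))} \lesssim (h + \tau^{1-\alpha/2})\nm{f}_{H^{1-\alpha}(0,T;L^2(\Omega))}$ and $\nm{\eta}_{L^2(0,T;L^2(\Omega))} \lesssim (h^2 + \tau)\nm{f}_{H^{1-\alpha}(0,T;L^2(\Omega))}$, the $h$-powers coming from the Ritz estimates against $\nm{u}_{L^2(\dot H^2)}$ and the $\tau$-powers from piecewise-constant approximation of $u\in H^{1-\alpha/2}(0,T;\dot H^1)$ and $u\in H^1(0,T;L^2)$. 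It remains to control $\theta$.

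Next I would use Galerkin orthogonality: subtracting \cref{eq:algor} from \cref{eq:weak_form} with $v=V$ gives $\dual{\D_{0+}^\alpha (u-U), V}_{H^{\alpha/2}(0,T;L^2(\Omega))} + \dual{\nabla(u-U), \nabla V}_{\Omega\times(0,T)} = 0$ for all $V\in\mathcal M_{h,\tau}$. Choosing $V=\theta$ and invoking the discrete coercivity and continuity of \cref{lem:coer} in the $L^2(\Omega)$-valued setting yields
\[
\snm{\theta}_{H^{\alpha/2}(0,T;L^2(\Omega))}^2 + \nm{\theta}_{L^2(0,T;\dot H^1(\Omega))}^2 \lesssim \snm{\dual{\D_{0+}^\alpha\eta,\theta}_{H^{\alpha/2}(0,T;L^2(\Omega))}} + \snm{\dual{\nabla\eta,\nabla\theta}_{\Omega\times(0,T)}}.
\]
The gradient cross term vanishes: writing $\eta = (I-R_h)u + R_h(I-P_\tau)u$, the first summand is $\dot H^1$-orthogonal to $\mathcal S_h$ for each fixed $t$ by the definition of $R_h$, while the second has zero temporal mean on each $I_j$ and is tested against the piecewise-constant-in-time $\theta$, so every subinterval contribution is zero. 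Hence $\dual{\nabla\eta,\nabla\theta}=0$, and the $\dot H^1$ estimate follows once the fractional cross term is controlled and absorbed.

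For the $L^2(0,T;L^2(\Omega))$ bound the energy estimate delivers only order $h+\tau^{1-\alpha/2}$, so I would run an Aubin--Nitsche duality argument in space-time. For the backward-in-time dual problem $\D_{T-}^\alpha z - \Delta z = u-U$ with $z(T)=0$, whose regularity is governed by the adjoint analogue of the regularity theorems of \cref{sec:regu}, I write $\nm{u-U}_{L^2(0,T;L^2(\Omega))}^2 = \dual{u-U, \D_{T-}^\alpha z - \Delta z}$, integrate the fractional part by parts to move $\D_{T-}^\alpha$ onto $\D_{0+}^\alpha(u-U)$, subtract Galerkin orthogonality tested against $\Pi z$, and bound the projection errors of $z$ by $h^2$ and $\tau$ using $z\in L^2(0,T;\dot H^2)\cap H^1(0,T;L^2)$. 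This upgrades the $L^2(L^2)$ rate to $(h^2+\tau)$.

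The main obstacle is the fractional cross term $\dual{\D_{0+}^\alpha\eta,\theta}$. A naive bound through the continuity $\D_{0+}^\alpha\colon H^{\alpha/2}\to(H^{\alpha/2})^*$ gives $\snm{\dual{\D_{0+}^\alpha\eta,\theta}} \lesssim \snm{\eta}_{H^{\alpha/2}(0,T;L^2)}\,\snm{\theta}_{H^{\alpha/2}(0,T;L^2)}$, but $\snm{(I-P_\tau)u}_{H^{\alpha/2}(0,T;L^2)}$ is only $O(\tau^{1-\alpha})$ for $u\in H^{1-\alpha/2}(0,T;L^2)$, short of the target $\tau^{1-\alpha/2}$ by a factor $\tau^{\alpha/2}$. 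Recovering this factor forces me to use the fine structure of $P_\tau$: on each $I_j$ the temporal error $(I-P_\tau)u$ has vanishing mean, and against the piecewise-constant $\theta$ the convolution kernel of $\D_{0+}^\alpha$ can be exploited (equivalently, integrating by parts so that only nodal values of $\I_{0+}^{1-\alpha}\eta$ are tested against the jumps of $\theta$). Likewise, because the spatial regularity of $\D_{0+}^\alpha u = f + \Delta u$ is only $L^2(\Omega)$, the identity $\D_{0+}^\alpha(I-R_h)u = (I-R_h)\D_{0+}^\alpha u$ cannot be estimated by differentiating $R_h u$, so the spatial part must also be kept inside the duality pairing and bounded by interpolation inequalities that trade temporal smoothness for spatial approximation order. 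Making these two estimates sharp, with constants uniform in $h$ and $\tau$, is the technical heart of the proof.
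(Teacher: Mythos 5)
Your energy argument for the $\dot H^1$ bound is essentially the paper's own: the paper proves \cref{thm:conv_I} by quasi-optimality in $W$ plus the two projection estimates and declares the proof of \cref{thm:conv_III} ``similar'', and your $\eta$--$\theta$ splitting with the Ritz projection (including the nice observation that $\dual{\nabla\eta,\nabla\theta}_{\Omega\times(0,T)}=0$) is just a mild variant of that. However, the ``main obstacle'' you describe for this part is self-inflicted: the last regularity theorem of \cref{sec:regu} gives $u\in H^1(0,T;L^2(\Omega))$, not merely $H^{1-\alpha/2}(0,T;L^2(\Omega))$ --- the exponent $1-\alpha/2$ belongs to the $\dot H^1(\Omega)$-valued norm. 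Taking $\gamma=1$ and $\beta=\alpha/2$ in the temporal projection estimate yields
\[
\snm{(I-P_\tau)u}_{H^{\alpha/2}(0,T;L^2(\Omega))}\lesssim \tau^{1-\alpha/2}\nm{u}_{H^1(0,T;L^2(\Omega))},
\]
exactly the target rate, so plain Cauchy--Schwarz and absorption close the $\dot H^1$ estimate; no analysis of jumps, nodal values of $\I_{0+}^{1-\alpha}\eta$, or the kernel of $\D_{0+}^\alpha$ is needed.

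The genuine gap is in your duality step. You assert that the backward solution $z$ with datum $u-U$ lies in $L^2(0,T;\dot H^2(\Omega))\cap H^1(0,T;L^2(\Omega))$, and this is what lets you claim projection errors of size $h^2$ and $\tau$. But the only norm of the datum available to the argument is $\nm{u-U}_{L^2(0,T;L^2(\Omega))}$, and the applicable regularity result --- the $\beta=0$, zero-initial-data case of \cref{thm:regu_pde_II}, time-reversed, which is precisely what the paper invokes for $z$ in its proof of \cref{thm:conv_I} --- gives only $z\in H^\alpha(0,T;L^2(\Omega))\cap H^{\alpha/2}(0,T;\dot H^1(\Omega))\cap L^2(0,T;\dot H^2(\Omega))$ with norm $\lesssim\nm{u-U}_{L^2(0,T;L^2(\Omega))}$. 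The $H^1$-in-time regularity you use would require the dual datum to lie in $H^{1-\alpha}(0,T;L^2(\Omega))$ with controlled norm, which the piecewise-constant $U$ rules out. Moreover, the pairings $\dual{\D_{0+}^\alpha(u-U),z-Z}_{H^{\alpha/2}(0,T;L^2(\Omega))}$ and $\dual{\nabla(u-U),\nabla(z-Z)}_{\Omega\times(0,T)}$ must be estimated in the $W$-norm, so with the correct dual regularity one only gets $\nm{z-Z}_W\lesssim (h+\tau^{\alpha/2})\nm{u-U}_{L^2(0,T;L^2(\Omega))}$, and the duality product becomes $(h+\tau^{1-\alpha/2})(h+\tau^{\alpha/2})$. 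Its cross term $h\tau^{\alpha/2}$ is \emph{not} bounded by $h^2+\tau$ (take $h=\tau^{1/2}$: then $h\tau^{\alpha/2}=\tau^{(1+\alpha)/2}\gg\tau$), so your argument as written does not deliver the claimed rate.

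To be fair, the paper is silent on exactly this point: its detailed duality argument is given only for \cref{thm:conv_I}, where the primal and dual error factors balance under the same scaling $h\sim\tau^{\alpha/2}$ and Young's inequality absorbs the cross terms, while in \cref{thm:conv_III} the two factors balance under different scalings. Your instinct that the cross terms are the technical heart is therefore correct, but the cure cannot be extra regularity of $z$; it must change the structure of the argument so that spatial and temporal errors never multiply --- for instance, insert the spatially semidiscrete solution $u_h$ and estimate $u-u_h$ by a purely spatial duality (giving $O(h^2)$ with no temporal factor) and $u_h-U$ by a purely temporal duality within $\mathcal S_h$ (giving $O(\tau^{1-\alpha/2})\cdot O(\tau^{\alpha/2})=O(\tau)$).
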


Since the proofs of \cref{thm:conv_II,thm:conv_III} are similar to that of
\cref{thm:conv_I}, below we only show the latter. To this end, we start by
introducing two interpolation operators. For any $ v \in L^1(0,T;X) $ with $ X $
being a separable Hilbert space, define $ P_\tau v $ by
\[
  \left( P_\tau v \right)|_{I_j} :=
  \frac1{\tau_j} \int_{I_j} v(t) \, \mathrm{d}t,
  \quad 1 \leqslant j \leqslant J.
\]
Let $ P_h: L^2(\Omega) \to \mathcal S_h $ be the well-known Cl\'ement interpolation
operator. For the above two operators, we have the following standard estimates
\cite{Cl1975Approximation,Ciarlet2002}: if $ 0 \leqslant \beta \leqslant 1 $ and $ \beta \leqslant \gamma
\leqslant 2 $, then
\[
  \nm{(I-P_h)v}_{H^\beta(\Omega)} \lesssim
  h^{\gamma-\beta} \nm{v}_{\dot H^\gamma(\Omega)},
  \quad \forall v \in \dot H^\gamma(\Omega);
\]
if $ 0 \leqslant \beta < 1/2 $ and $ \beta \leqslant \gamma \leqslant 1 $, then
\[
  \nm{(I-P_\tau)w}_{H^\beta(0,T)} \lesssim
  \tau^{\gamma-\beta} \nm{w}_{H^\gamma(0,T)},
  \quad \forall w \in H^\gamma(0,T).
\]
For clarity, below we shall use the above two estimates implicitly.

\medskip\noindent{\bf Proof of \cref{thm:conv_I}.} Let us first prove
  \cref{eq:conv_I_H1}. By \cref{lem:coer}, a standard procedure yields that
  \[
    \nm{u-U}_W \lesssim \nm{u-P_\tau P_h u}_W,
  \]
 then using the triangle inequality gives
  \begin{align*}
    \nm{u-U}_{W}
    & \lesssim
    \snm{(I-P_h)u}_{H^{\alpha/2}(0,T;L^2(\Omega))} +
    \snm{(I-P_\tau)P_hu}_{H^{\alpha/2}(0,T;L^2(\Omega))} \\
    & \quad + {} \nm{(I-P_h)u}_{L^2(0,T;\dot H^1(\Omega))} +
    \nm{(I-P_\tau)P_hu}_{L^2(0,T;\dot H^1(\Omega))}.
  \end{align*}
Since
  \begin{align*}
  \snm{(I-P_\tau)P_hu}_{H^{\alpha/2}(0,T;L^2(\Omega))}
  &\leqslant \snm{(I-P_\tau)u}_{H^{\alpha/2}(0,T;L^2(\Omega))},\\
\nm{(I-P_\tau)P_hu}_{L^2(0,T;\dot H^1(\Omega))} &\lesssim
\nm{(I-P_\tau)u}_{L^2(0,T; \dot H^1(\Omega))},
\end{align*}
it follows that
 \begin{equation}\label{eq:u-U_W}
  \begin{aligned}
\nm{u-U}_{W}& \lesssim
\snm{(I-P_h)u}_{H^{\alpha/2}(0,T;L^2(\Omega))} +
\snm{(I-P_\tau)u}_{H^{\alpha/2}(0,T;L^2(\Omega))} \\
& \quad + {} \nm{(I-P_h)u}_{L^2(0,T;\dot H^1(\Omega))} +
\nm{(I-P_\tau) u}_{L^2(0,T; \dot H^1(\Omega))}.
\end{aligned}
  \end{equation}
  Therefore, \cref{eq:conv_I_H1} is a direct consequence of
  \cref{thm:regu_pde_I} and the following estimates:
  \begin{align*}
      \nm{(I-P_h)u}_{L^2(0,T;\dot H^1(\Omega))} &
  \lesssim h^{1-\beta} \nm{u}_{ L^2( 0,T; \dot H^{2-\beta}(\Omega) ) }, \\
    \snm{(I-P_h)u}_{ H^{\alpha/2}( 0,T;L^2(\Omega) ) } &
    \lesssim h^{1-\beta} \snm{u}_{ H^{\alpha/2}( 0,T; \dot H^{1-\beta}(\Omega) ) }, \\
        \snm{(I-P_\tau)u}_{ H^{\alpha/2}( 0,T;L^2(\Omega) ) } &
    \lesssim \tau^{\alpha(1-\beta)/2}
    \snm{u}_{ H^{\alpha(1-\beta/2)}( 0,T;L^2(\Omega) ) },\\
    \nm{(I-P_\tau)u}_{ L^2( 0,T; \dot H^1(\Omega) ) } &
    \lesssim \tau^{\alpha(1-\beta)/2} \snm{u}_{ H^{\alpha(1-\beta)/2}( 0,T; \dot H^1(\Omega) ) }.
  \end{align*}

  Then let us prove \cref{eq:conv_I_L2}. By \cref{lem:coer}, the well known
  Lax-Milgram theorem implies that there exists a unique $ z \in W $ such that
  \[
    \dual{\D_{T-}^\alpha z, v}_{H^{\alpha/2}(0,T;L^2(\Omega))} +
    \dual{\nabla z, \nabla v}_{\Omega \times (0,T)}
    = \dual{u-U,v}_{\Omega \times (0,T)}
  \]
  for all $ v \in W $. Substituting $ v = u-U $ into the above equation yields
  \begin{align*}
    \nm{u-U}_{L^2(0,T;L^2(\Omega))}^2
    &= \dual{\D_{T-}^\alpha z, u-U}_{H^{\alpha/2}(0,T;L^2(\Omega))} +
    \dual{\nabla z,\nabla(u-U)}_{\Omega \times (0,T)} \\
    &=
    \dual{\D_{0+}^\alpha(u-U), z}_{H^{\alpha/2}(0,T;L^2(\Omega))} +
    \dual{\nabla(u-U),\nabla z}_{\Omega \times (0,T)},
  \end{align*}
  by \cref{lem:coer}. Setting $ Z = P_\tau P_h z $, as combining
  \cref{eq:weak_form,eq:algor} gives
  \[
    \dual{\D_{0+}^\alpha(u-U), Z}_{H^{\alpha/2}(0,T;L^2(\Omega))} +
    \dual{\nabla(u-U), \nabla Z}_{\Omega \times (0,T)}
    =0,
  \]
  we obtain
  \begin{align*}
    & \nm{u-U}_{L^2(0,T;L^2(\Omega))}^2 \\
    ={} &
    \dual{\D_{0+}^\alpha(u-U), z-Z}_{H^{\alpha/2}(0,T;L^2(\Omega))} +
    \dual{\nabla(u-U),\nabla (z-Z)}_{\Omega \times (0,T)}.
  \end{align*}
  Then \cref{lem:coer} implies that
  \begin{equation}
    \label{eq:conv_I_L2-1}
    \begin{aligned}
      \nm{u-U}_{L^2(0,T;L^2(\Omega))}^2
      & \leqslant
      \snm{u-U}_{H^{\alpha/2}(0,T;L^2(\Omega))}
      \snm{z-Z}_{H^{\alpha/2}(0,T;L^2(\Omega))} \\
      & \quad {} +
      \nm{u-U}_{L^2(0,T;\dot H^1(\Omega))}
      \nm{z-Z}_{L^2(0,T;\dot H^1(\Omega))}\\
      & \leqslant \nm{u-U}_{W} \nm{z-Z}_{W}.
    \end{aligned}
  \end{equation}
  Similarly to the regularity estimate in \cref{thm:regu_pde_I}, we have
  \[
    \nm{z}_{H^\alpha(0,T;L^2(\Omega))} +
    \snm{z}_{H^{\alpha/2}(0,T;\dot H^1(\Omega))} +
    \nm{z}_{L^2(0,T; \dot H^2(\Omega))} \lesssim
    \nm{u-U}_{L^2(0,T;L^2(\Omega))},
  \]
  so that proceeding as in the proof of \cref{eq:conv_I_H1} yields
  \[
    \nm{z-Z}_{W} \lesssim (h+\tau^{\alpha/2})
    \nm{u-U}_{L^2(0,T;L^2(\Omega))}.
  \]
  Collecting the above estimate, \cref{eq:u-U_W,eq:conv_I_L2-1} gives
    \begin{align*}
  & \nm{u-U}_{L^2( 0,T;L^2(\Omega) )} \notag \\
  \lesssim{} &
   (h+\tau^{\alpha/2})\left( h^{1-\beta} + \tau^{\alpha(1-\beta)/2} \right)
  \left(
  \nm{f}_{L^2(0,T;H^{-\beta}(\Omega))} +
  \nm{u_0}_{H^{-\beta}(\Omega)}
  \right).
  \end{align*}
Therefore, \cref{eq:conv_I_L2} is a direct
  consequence of the following two estimates:
  \begin{align*}
    h\tau^{\alpha(1-\beta)/2} & =
    \left( h^{2-\beta} \right)^{1/(2-\beta)}
    \left( \tau^{\alpha(1-\beta/2)} \right)^{1-1/(2-\beta)} \\
    & \leqslant h^{2-\beta}/(2-\beta) + (1-1/(2-\beta))
    \tau^{\alpha(1-\beta/2)}, \\
    h^{1-\beta} \tau^{\alpha/2} &=
    \left( h^{2-\beta} \right)^{(1-\beta)/(2-\beta)}
    \left( \tau^{\alpha(1-\beta/2)} \right)^{1-(1-\beta)/(2-\beta)} \\
    & \leqslant
    (1-\beta)/(2-\beta) h^{2-\beta} +
    (1-(1-\beta)/(2-\beta)) \tau^{\alpha(1-\beta/2)}.
  \end{align*}
  This completes the proof.
\hfill\ensuremath{\blacksquare}

\section{Numerical Results}
\label{sec:numer}
This section performs some numerical experiments to verify our theoretical results in
one-dimensional space. We set $ \Omega = (0,1) $, $ T = 1 $ and
\begin{align*}
	\mathcal E_1 &: = \nm{\widetilde u - U}_{L^2(0,T;H_0^1(\Omega))}, \\
	\mathcal E_2 &: = \nm{\widetilde u - U}_{L^2(0,T;L^2(\Omega))},
\end{align*}
where $ \widetilde u $ is a reference solution.
\vskip 0.2cm
\medskip\noindent{\bf Experiment 1.} This experiment verifies \cref{thm:conv_I} under the
condition that
\begin{alignat*}{2}
	u_0(x) & := x^r, & \quad & 0 < x < 1, \\
	f(x,t) & := x^r t^{-0.49}, & \quad & 0 < x < 1,\ 0 < t < T.
\end{alignat*}
We first summarize the numerical results in \cref{tab:ex1-space} as follows.
\begin{itemize}
	\item If $ r=-0.8$, then
	\[
	u_0 \in H^{-\beta}(\Omega) \quad\text{ and }
	\quad f \in L^2(0,T;H^{-\beta}(\Omega))
	\]
	for all $ \beta > 0.3 $. Therefore, \cref{thm:conv_I} indicates that the spatial
	convergence orders of $ \mathcal E_1 $ and $ \mathcal E_2 $ are close to $ \mathcal
	O(h^{0.7}) $ and $ \mathcal O(h^{1.7}) $, respectively. This is confirmed by the
	numerical results.
	\item If $ r=-0.99 $, then
	\[
	u_0 \in H^{-\beta}(\Omega) \quad\text{ and }
	\quad f \in L^2(0,T;H^{-\beta}(\Omega))
	\]
	for all $ \beta > 0.49 $. Therefore, \cref{thm:conv_I} indicates that the spatial
	convergence orders of $ \mathcal E_1 $ and $ \mathcal E_2 $ are close to $ \mathcal
	O(h^{0.51}) $ and $ \mathcal O(h^{1.51}) $, respectively. This agrees well with the
	numerical results.
\end{itemize}

In the case of $ \alpha = 0.4 $ and $ r=-0.49 $, \cref{thm:conv_I} indicates that the
temporal convergence orders of $ \mathcal E_1 $ and $ \mathcal E_2 $ are close to $
\mathcal O(\tau^{0.2}) $ and $ \mathcal O(\tau^{0.4}) $, respectively. In the case of $
\alpha = 0.4 $ and $ r=0.99 $, \cref{thm:conv_I} indicates that the temporal convergence
orders of $ \mathcal E_1 $ and $ \mathcal E_2 $ are close to $ \mathcal O(\tau^{0.1}) $
and $ \mathcal O(\tau^{0.3}) $, respectively. These theoretical results coincide with the numerical results in
\cref{tab:ex1-time}.
\begin{table}[ht]
	\caption{ Convergence history with
		$ \tau = 2^{-15}$ ( $ \widetilde u $ is the numerical solution at $
		h=2^{-11} $).
	}
	\label{tab:ex1-space} \small\setlength{\tabcolsep}{1.5pt}
	\begin{tabular}{lcccccccccccc}
		\toprule
		&& & & \multicolumn{4}{c}{$\alpha=0.2$} & &
		\multicolumn{4}{c}{$\alpha=0.4$} \\
		\cmidrule{5-8} \cmidrule{10-13}
		&& $h$ &\phantom{a} & $ \mathcal E_1 $ & Order & $ \mathcal E_2 $ & Order & \phantom{aa}
		& $ \mathcal E_1 $ & Order & $ \mathcal E_2 $ & Order \\
		\midrule
		\multirow{5}{*}{$r\!=\!-0.8$}
		&  & $2^{-3}$ &  & 7.56e-1 & --   & 1.15e-2 & --   &  & 8.12e-1 & --   & 2.87e-2 & --   \\
		&  & $2^{-4}$ &  & 4.78e-1 & 0.66 & 3.64e-3 & 1.66 &  & 5.23e-1 & 0.64 & 9.42e-3 & 1.61 \\
		&  & $2^{-5}$ &  & 2.99e-1 & 0.68 & 1.14e-3 & 1.68 &  & 3.30e-1 & 0.66 & 3.02e-3 & 1.64 \\
		&  & $2^{-6}$ &  & 1.85e-1 & 0.69 & 3.53e-4 & 1.69 &  & 2.06e-1 & 0.68 & 9.51e-4 & 1.67 \\
		\midrule
		\multirow{5}{*}{$r\!=\!-0.99$}
		&  & $2^{-3}$ &  & 1.51e-0 & --   & 5.10e-2 & --   &  & 1.64e-0 & --   & 5.45e-2 & --   \\
		&  & $2^{-4}$ &  & 1.07e-0 & 0.49 & 1.84e-2 & 1.47 &  & 1.19e-0 & 0.47 & 2.01e-2 & 1.44 \\
		&  & $2^{-5}$ &  & 7.54e-1 & 0.41 & 6.53e-3 & 1.49 &  & 8.42e-1 & 0.49 & 7.25e-3 & 1.47 \\
		&  & $2^{-6}$ &  & 5.27e-1 & 0.52 & 2.31e-3 & 1.50 &  & 5.91e-1 & 0.51 & 2.58e-3 & 1.49 \\
		\bottomrule
	\end{tabular}
\end{table}

\begin{table}[ht]
	\caption{Convergence history with 
		$\alpha=0.4$ and $ h=2^{-10}$ ($ \widetilde u $ is the numerical solution at $ \tau=2^{-17} $).
	}
	\label{tab:ex1-time}
	\small\setlength{\tabcolsep}{1.5pt}
	\begin{tabular}{ccccccccccc}
		\toprule
		\multicolumn{5}{c}{$r=-0.49$} & & \multicolumn{5}{c}{$r=-0.99$} \\
		\cmidrule{1-5} \cmidrule{7-11}
		$\tau $ & $ \mathcal E_1 $ & Order & $ \mathcal E_2 $ & Order & \phantom{aa} &
		$\tau $ & $ \mathcal E_1 $ & Order & $ \mathcal E_2 $ & Order \\
		$2^{-5}$ & 4.54e-1 & --   & 1.20e-2 & --   &  & $2^{-3}$ & 1.80 & --   & 3.49e-1 & --   \\
		$2^{-6}$ & 3.77e-1 & 0.27 & 9.53e-2 & 0.33 &  & $2^{-4}$ & 1.62 & 0.15 & 2.93e-1 & 0.25 \\
		$2^{-7}$ & 3.11e-1 & 0.28 & 7.39e-2 & 0.37 &  & $2^{-5}$ & 1.45 & 0.16 & 2.42e-1 & 0.28 \\
		$2^{-8}$ & 2.56e-1 & 0.28 & 5.63e-2 & 0.39 &  & $2^{-6}$ & 1.30 & 0.16 & 1.96e-1 & 0.30 \\
		\bottomrule
	\end{tabular}
\end{table}

\vskip 0.1cm
\noindent{\bf Experiment 2.} This experiment verifies \cref{thm:conv_II} under the
condition that
\begin{alignat*}{2}
	u_0(x) & := cx^{-0.49}, & \quad & 0 < x < 1, \\
	f(x,t) & := x^{-0.8} t^{-0.49}, & \quad & 0 < x < 1,\ 0 < t < T.
\end{alignat*}
For $ \alpha = 0.7 $, \cref{thm:conv_II} implies the following results: if $ c=0 $, then
\[
\mathcal E_1 \approx\mathcal O(h^{0.7})\quad\mathrm{and}\quad \mathcal E_2  \approx\mathcal O(h^{1.7});
\]
if $ c=1 $, then
\[
\mathcal E_1 \approx\mathcal O(h^{0.43})\quad\mathrm{and}\quad \mathcal E_2  \approx\mathcal O(h^{1.43}).
\] 
These theoretical results are confirmed by
the numerical results in \cref{tab:ex2-space}.

For $ \alpha=0.8 $, \cref{thm:conv_II} implies the following results: if $ c = 0 $, then
the temporal convergence orders of $ \mathcal E_1 $ and $ \mathcal E_2 $ are close to $
\mathcal O(\tau^{0.28}) $ and $ \mathcal O(\tau^{0.68}) $, respectively; if $ c = 1 $,
then the temporal convergence orders of $ \mathcal E_1 $ and $ \mathcal E_2 $ are close to
$ \mathcal O(\tau^{0.1}) $ and $ \mathcal O(\tau^{0.5}) $, respectively. These  theoretical results  are verified
by \cref{tab:ex2-time}.
\begin{table}[H]
	\caption{ Convergence history with
		$\alpha=0.7$ and $ \tau = 2^{-15}$ ($ \widetilde u $ is the numerical solution at  $ h=2^{-11} $).
	}
	\label{tab:ex2-space}
	\small\setlength{\tabcolsep}{1.5pt}
	\begin{tabular}{ccccccccccc}
		\toprule
		& & \multicolumn{4}{c}{$c=0$} & &
		\multicolumn{4}{c}{$c=1$} \\
		\cmidrule{3-6} \cmidrule{8-11}
		$h$ & & $ \mathcal E_1 $ & Order & $ \mathcal E_2 $ & Order & \phantom{aa}
		& $ \mathcal E_1 $ & Order & $ \mathcal E_2 $ & Order \\
		\midrule
		$2^{-2}$ &  & 7.50e-1 & --   & 5.07e-2 & --   &  & 1.76e-0 & --   & 1.04e-1 & --   \\
		$2^{-3}$ &  & 5.12e-1 & 0.55 & 1.77e-2 & 1.52 &  & 1.37e-0 & 0.36 & 4.19e-2 & 1.32 \\
		$2^{-4}$ &  & 3.42e-1 & 0.58 & 6.03e-3 & 1.55 &  & 1.04e-0 & 0.40 & 1.67e-2 & 1.33 \\
		$2^{-5}$ &  & 2.23e-1 & 0.62 & 2.00e-3 & 1.59 &  & 7.56e-1 & 0.46 & 6.35e-3 & 1.39 \\
		$2^{-6}$ &  & 1.42e-1 & 0.65 & 6.49e-4 & 1.63 &  & 5.18e-1 & 0.55 & 2.26e-3 & 1.49 \\
		\bottomrule
	\end{tabular}
\end{table}

\begin{table}[ht]
	\caption{Convergence history with
		$\alpha=0.8$, $r=-0.8$, and $ h = 2^{-10}$ ($ \widetilde u $ is the numerical
		solution at $ \tau=2^{-17} $).
	}
	\label{tab:ex2-time}
	\small\setlength{\tabcolsep}{1.5pt}
	\begin{tabular}{cccccccccccc}
		\toprule
		\multicolumn{6}{c}{$\mathcal E_1$} & &
		\multicolumn{5}{c}{$\mathcal E_2$}  \\
		\cmidrule{1-6} \cmidrule{8-12}
		$\tau $ & & $ c=0 $ & Order & $ c=1 $ & Order & \phantom{aa}
		& $\tau $ & $ c=0 $ & Order & $ c=1 $ & Order \\
		\midrule
		$2^{-4}$ &  & 3.08e-1 & --   & 8.32e-1 & --   &  & $2^{-7}$  & 1.53e-2 & --   & 2.69e-2 & --   \\
		$2^{-5}$ &  & 2.55e-1 & 0.27 & 7.34e-1 & 0.18 &  & $2^{-8}$  & 1.05e-2 & 0.55 & 1.88e-2 & 0.52 \\
		$2^{-6}$ &  & 2.09e-1 & 0.29 & 6.50e-1 & 0.18 &  & $2^{-9}$  & 6.91e-3 & 0.60 & 1.31e-2 & 0.53 \\
		$2^{-7}$ &  & 1.69e-1 & 0.30 & 5.75e-1 & 0.18 &  & $2^{-10}$ & 4.47e-3 & 0.63 & 9.00e-2 & 0.54 \\
		$2^{-8}$ &  & 1.37e-1 & 0.31 & 5.06e-1 & 0.18 &  & $2^{-11}$ & 2.84e-3 & 0.65 & 6.17e-2 & 0.55 \\
		$2^{-9}$ &  & 1.10e-1 & 0.31 & 4.44e-1 & 0.19 &  & $2^{-12}$ & 1.78e-3 & 0.68 & 4.19e-2 & 0.56 \\
		\bottomrule
	\end{tabular}
\end{table}

\noindent{\bf Experiment 3.} This experiment verifies \cref{thm:conv_III}. Here we set $
\alpha = 0.8 $ and
\begin{alignat*}{2}
	u_0(x) & := 0, & \quad & 0 < x < 1, \\
	f(x,t) & := x^{-0.49} t^{-0.29}, & \quad & 0 < x < 1,\ 0 < t < T.
\end{alignat*}
\cref{thm:conv_III} implies that the convergence orders of $ \mathcal E_1 $ and $
\mathcal E_2 $ are $ \mathcal O(h+\tau^{0.6}) $ and $ \mathcal O(h^2+\tau) $,
respectively, which is confirmed by \cref{tab:ex3-space,tab:ex3-time}.

\newfloatcommand{capbtabbox}{table}[][\FBwidth]
\begin{table}[H]
	\begin{floatrow}
		\capbtabbox{
			\small\setlength{\tabcolsep}{1.3pt}
			\begin{tabular}{ccccccc}
				\toprule
				$h$ & \phantom{a} &  $\mathcal E_1$ & Order &  & $\mathcal E_2$ & Order \\
				\midrule
				$2^{-3}$ &  & 1.09e-2 & --   &  & 4.08e-3 & --   \\
				$2^{-4}$ &  & 5.87e-2 & 0.89 &  & 1.11e-3 & 1.88 \\
				$2^{-5}$ &  & 3.13e-2 & 0.91 &  & 2.98e-4 & 1.90 \\
				$2^{-6}$ &  & 1.66e-2 & 0.92 &  & 7.92e-5 & 1.91 \\
				$2^{-7}$ &  & 8.71e-3 & 0.93 &  & 2.09e-5 & 1.92 \\
				$2^{-8}$ &  & 4.55e-3 & 0.94 &  & 5.47e-6 & 1.93 \\
				\bottomrule
			\end{tabular}
		}{
			\caption{Convergence history with $ \tau = 2^{-15}$ ($ \widetilde u $ is the numerical solution at $ h=2^{-12} $).} \label{tab:ex3-space}
		}
		\capbtabbox{
			\small\setlength{\tabcolsep}{1.3pt}
			\begin{tabular}{ccccccc}
				\toprule
				$\tau$
				& \phantom{a} &  $\mathcal E_1$ & Order &  & $\mathcal E_2$ & Order \\
				\midrule
				$2^{-6}$  &  & 2.32e-2 & --   &  & 6.75e-3 & --   \\
				$2^{-7}$  &  & 1.52e-2 & 0.62 &  & 4.19e-3 & 0.69 \\
				$2^{-8}$  &  & 9.73e-3 & 0.64 &  & 2.47e-3 & 0.76 \\
				$2^{-9}$  &  & 6.22e-3 & 0.65 &  & 1.41e-3 & 0.81 \\
				$2^{-10}$ &  & 3.97e-3 & 0.65 &  & 7.81e-4 & 0.85 \\
				$2^{-11}$ &  & 2.54e-3 & 0.65 &  & 4.27e-4 & 0.87 \\
				\bottomrule
			\end{tabular}
		}{
			\caption{Convergence history with $ h = 2^{-10}$ ($ \widetilde u $ is the numerical solution at  $ \tau=2^{-17} $).} \label{tab:ex3-time}
		}
	\end{floatrow}
\end{table}

\appendix

\section{Properties of Fractional Calculus Operators}
\begin{lem}[\cite{Samko1993, Diethelm2010,Podlubny1998}]
  \label{lem:basic-frac}
  Let $ -\infty < a < b < \infty $. If $ 0 < \beta, \gamma < \infty $, then
  \[
    \I_{a+}^\beta \I_{a+}^\gamma = \I_{a+}^{\beta+\gamma}, \quad
    \I_{b-}^\beta \I_{b-}^\gamma = \I_{b-}^{\beta+\gamma},
  \]
  and
  \[
    \dual{\I_{a+}^\beta v,w}_{(a,b)} =
    \dual{v, \I_{b-}^\beta w}_{(a,b)}
  \]
  for all $ v, w \in L^2(a,b) $.
\end{lem}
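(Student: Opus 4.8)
The plan is to derive all three identities directly from the integral definitions of $\I_{a+}^\gamma$ and $\I_{b-}^\gamma$, the only analytic ingredient being an application of the Fubini--Tonelli theorem to interchange the order of integration, followed by the evaluation of a Beta integral. I first record the structural fact that on the bounded interval $(a,b)$ one has $L^2(a,b)\subset L^1(a,b)$, so $v,w\in L^1(a,b)$, and that the weakly singular kernels $(t-s)^{\gamma-1}$ with $\gamma>0$ are locally integrable; this is precisely what will let me invoke Tonelli on the nonnegative integrands to certify absolute convergence before applying Fubini.

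For the left semigroup property I would write out
\[
  \bigl(\I_{a+}^\beta \I_{a+}^\gamma v\bigr)(t) = \frac{1}{\Gamma(\beta)\Gamma(\gamma)} \int_a^t (t-r)^{\beta-1} \int_a^r (r-s)^{\gamma-1} v(s)\,\mathrm{d}s\,\mathrm{d}r,
\]
and interchange the $r$- and $s$-integrations over the triangle $\{a<s<r<t\}$ to obtain
\[
  \frac{1}{\Gamma(\beta)\Gamma(\gamma)} \int_a^t v(s) \Bigl( \int_s^t (t-r)^{\beta-1}(r-s)^{\gamma-1}\,\mathrm{d}r \Bigr)\,\mathrm{d}s.
\]
The inner integral is computed by the substitution $r = s+(t-s)\xi$, turning it into $(t-s)^{\beta+\gamma-1}\int_0^1 \xi^{\gamma-1}(1-\xi)^{\beta-1}\,\mathrm{d}\xi = (t-s)^{\beta+\gamma-1}\Gamma(\beta)\Gamma(\gamma)/\Gamma(\beta+\gamma)$ by the Beta--Gamma relation. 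Substituting back collapses the prefactors to $1/\Gamma(\beta+\gamma)$ and yields exactly $(\I_{a+}^{\beta+\gamma}v)(t)$.

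For the right semigroup property I would avoid repeating the computation by introducing the reflection $(Rv)(t):=v(a+b-t)$, for which a one-line change of variables gives $\I_{b-}^\gamma = R\,\I_{a+}^\gamma\, R$ together with $R^2=\mathrm{id}$; then $\I_{b-}^\beta\I_{b-}^\gamma = R\,\I_{a+}^\beta\I_{a+}^\gamma\,R = R\,\I_{a+}^{\beta+\gamma}\,R = \I_{b-}^{\beta+\gamma}$ follows from the left case. Finally, the adjoint identity is obtained by the same Fubini interchange applied once:
\[
  \dual{\I_{a+}^\beta v,w}_{(a,b)} = \frac{1}{\Gamma(\beta)}\int_a^b w(t) \int_a^t (t-s)^{\beta-1} v(s)\,\mathrm{d}s\,\mathrm{d}t = \frac{1}{\Gamma(\beta)}\int_a^b v(s) \int_s^b (t-s)^{\beta-1} w(t)\,\mathrm{d}t\,\mathrm{d}s,
\]
and the inner integral is precisely $\Gamma(\beta)\,(\I_{b-}^\beta w)(s)$, giving $\dual{v,\I_{b-}^\beta w}_{(a,b)}$. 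The only point requiring care --- and the main (if modest) obstacle --- is the justification of the interchanges: I would verify absolute integrability of $(t-r)^{\beta-1}(r-s)^{\gamma-1}\snm{v(s)}$ (respectively $(t-s)^{\beta-1}\snm{v(s)}\snm{w(t)}$) over the relevant triangle via Tonelli, using $v,w\in L^1(a,b)$ and $\beta,\gamma>0$, after which Fubini's theorem applies and all the formal manipulations above are legitimate for a.e.\ $t$.
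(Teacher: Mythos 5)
The paper does not actually prove \cref{lem:basic-frac}: it is quoted as a known result with citations to Samko--Kilbas--Marichev, Diethelm, and Podlubny, so there is no internal proof to compare against. Your argument is precisely the standard one found in those references --- Tonelli/Fubini interchange over the triangle $\{a<s<r<t\}$ plus the Beta--Gamma identity for the left semigroup law, the reflection $R$ with $R^2=\mathrm{id}$ and $\I_{b-}^\gamma = R\,\I_{a+}^\gamma R$ to transfer it to the right-sided operators, and one further Fubini interchange for the duality relation --- and the structure is correct. In particular, for the semigroup identity your justification is complete: with $v\in L^1(a,b)$, Tonelli gives the finite majorant $B(\beta,\gamma)\,\bigl(\I_{a+}^{\beta+\gamma}\snm{v}\bigr)(t)<\infty$ for a.e.\ $t$, which licenses the interchange.

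One justification in your last paragraph is, however, too weak as stated: for the adjoint identity you propose to certify Tonelli ``using $v,w\in L^1(a,b)$,'' but $L^1$ regularity alone does not make $(t-s)^{\beta-1}\snm{v(s)}\snm{w(t)}$ integrable over the triangle when $0<\beta<1$. For instance, take $v=w$ with $v(t)=\snm{t-c}^{\epsilon-1}$ near an interior point $c$ and $0<\epsilon\leqslant(1-\beta)/2$: then $v\in L^1(a,b)$, yet a scaling computation near the diagonal point $(c,c)$ shows the double integral diverges. The repair is immediate and uses exactly the $L^2$ hypothesis of the lemma: since the kernel $s\mapsto s^{\beta-1}\chi_{(0,b-a)}(s)$ lies in $L^1(\mathbb R)$, Young's convolution inequality gives $\nm{\I_{a+}^\beta\snm{v}}_{L^2(a,b)}\leqslant C_{\beta,b-a}\nm{v}_{L^2(a,b)}$, and then Cauchy--Schwarz bounds
\[
  \int_a^b\!\!\int_a^t (t-s)^{\beta-1}\snm{v(s)}\,\snm{w(t)}\,\mathrm{d}s\,\mathrm{d}t
  \leqslant \Gamma(\beta)\,\nm{\I_{a+}^\beta\snm{v}}_{L^2(a,b)}\nm{w}_{L^2(a,b)}<\infty,
\]
after which Fubini applies and your computation goes through verbatim. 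With this one-line substitution the proof is complete and matches the standard literature argument the paper points to.
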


\begin{lem}[\cite{Ervin2006}]
  \label{lem:coer}
  Assume that $ -\infty < a < b < \infty $ and $ 0 < \gamma < 1/2 $. If $ v \in
  H^\gamma(a,b) $, then
  \begin{align*}
    & \nm{\D_{a+}^\gamma v}_{L^2(a,b)} \leqslant \snm{v}_{H^\gamma(a,b)}, \\
    & \nm{\D_{b-}^\gamma v}_{L^2(a,b)} \leqslant \snm{v}_{H^\gamma(a,b)}, \\
    & \dual{\D_{a+}^\gamma v, \D_{b-}^\gamma v}_{(a,b)}
    = \cos(\gamma\pi) \snm{v}_{H^{\gamma}(a,b)}^2.
  \end{align*}
  Moreover, if $ v,w \in H^{\gamma}(a,b) $, then
  \begin{align*}
    & \dual{\D_{a+}^\gamma v, \D_{b-}^\gamma w}_{(a,b)} \leqslant
    \snm{v}_{H^\gamma(a,b)} \snm{w}_{H^\gamma(a,b)},\\
    & \dual{\D_{a+}^{2\gamma} v, w}_{H^\gamma(a,b)} =
    \dual{\D_{a+}^\gamma v, \D_{b-}^\gamma w}_{(a,b)} =
    \dual{\D_{b-}^{2\gamma} w, v}_{H^\gamma(a,b)}.
  \end{align*}
\end{lem}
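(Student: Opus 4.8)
The plan is to reduce every assertion to the whole real line by zero extension and then read it off from the Fourier multipliers of the one-sided fractional operators. Because $0<\gamma<1/2$, the zero extension $\tilde v:=v\chi_{(a,b)}$ lies in $H^\gamma(\mathbb R)$, and by the Fourier definition of $\snm{\cdot}_{H^\gamma(a,b)}$ adopted in the paper one has $\snm{v}_{H^\gamma(a,b)}=\snm{\tilde v}_{H^\gamma(\mathbb R)}$. First I would verify that on $(a,b)$ the operators $\D_{a+}^\gamma$ and $\D_{b-}^\gamma$ agree with the whole-line left and right Liouville derivatives $\D^\gamma_+\tilde v$ and $\D^\gamma_-\tilde v$; this uses $\D_{a+}^\gamma=\D\,\I_{a+}^{1-\gamma}$, $\D_{b-}^\gamma=-\D\,\I_{b-}^{1-\gamma}$ together with the fact that $\tilde v$ is supported in $[a,b]$. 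On the Fourier side these derivatives are the multipliers $(i\xi)^\gamma=\snm{\xi}^\gamma e^{i\gamma\pi\,\mathrm{sgn}(\xi)/2}$ and $(-i\xi)^\gamma=\snm{\xi}^\gamma e^{-i\gamma\pi\,\mathrm{sgn}(\xi)/2}$. By density it then suffices to prove the identities for smooth compactly supported $v,w$ and to pass to the limit using the continuity of $\D_{a+}^\gamma,\D_{b-}^\gamma\colon H^\gamma(a,b)\to L^2(a,b)$.

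The two norm bounds are immediate from this set-up: restricting the $L^2(\mathbb R)$ norm of $\D^\gamma_+\tilde v$ to $(a,b)$ and using $\snm{(\pm i\xi)^\gamma}=\snm{\xi}^\gamma$ gives $\nm{\D_{a+}^\gamma v}_{L^2(a,b)}\le\nm{\D^\gamma_+\tilde v}_{L^2(\mathbb R)}=\snm{v}_{H^\gamma(a,b)}$, and symmetrically for $\D_{b-}^\gamma$. The heart of the matter is the coercivity identity, and the observation I would isolate is a support property: $\D^\gamma_+\tilde v$ vanishes on $(-\infty,a)$ while $\D^\gamma_-\tilde w$ vanishes on $(b,\infty)$, so their product is supported in $[a,b]$. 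This lets me rewrite $\dual{\D_{a+}^\gamma v,\D_{b-}^\gamma w}_{(a,b)}$ as $\int_{\mathbb R}(\D^\gamma_+\tilde v)(\D^\gamma_-\tilde w)$ and apply Parseval to get $\int_{\mathbb R}(i\xi)^\gamma\overline{(-i\xi)^\gamma}\,\hat{\tilde v}\,\overline{\hat{\tilde w}}$. Since $(i\xi)^\gamma\overline{(-i\xi)^\gamma}=\snm{\xi}^{2\gamma}e^{i\gamma\pi\,\mathrm{sgn}(\xi)}$, setting $w=v$ and using the evenness of $\snm{\hat{\tilde v}}^2$ makes the odd imaginary part cancel, leaving exactly $\cos(\gamma\pi)\snm{v}_{H^\gamma(a,b)}^2$; the estimate $\dual{\D_{a+}^\gamma v,\D_{b-}^\gamma w}_{(a,b)}\le\snm{v}_{H^\gamma(a,b)}\snm{w}_{H^\gamma(a,b)}$ for general $v,w$ then follows from $\snm{e^{i\gamma\pi\,\mathrm{sgn}(\xi)}}=1$ and Cauchy--Schwarz in the weighted $L^2$.

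The two identities involving $\D_{a+}^{2\gamma}$ and $\D_{b-}^{2\gamma}$ rest on the elementary relation $\overline{(-i\xi)^\gamma}=(i\xi)^\gamma$ for real $\xi$, whence $(i\xi)^\gamma\overline{(-i\xi)^\gamma}=(i\xi)^{2\gamma}$, which is precisely the multiplier of the order-$2\gamma$ left derivative. As $w$ is zero-extended, the $H^\gamma(a,b)$ duality pairing coincides with the whole-line $L^2$ pairing, so $\dual{\D_{a+}^{2\gamma}v,w}_{H^\gamma(a,b)}=\int_{\mathbb R}(i\xi)^{2\gamma}\hat{\tilde v}\,\overline{\hat{\tilde w}}$ equals the middle term by Parseval; the third equality follows by conjugating and using that the middle quantity is real, being an integral of a product of real functions. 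I expect the main difficulty to be bookkeeping rather than a deep idea: rigorously justifying the Fourier-multiplier formulas for the one-sided derivatives (the correct branch of $(\pm i\xi)^\gamma$ and the reduction of $\D\,\I^{1-\gamma}_{\cdot}$ to a single multiplier), the support statements that localize the whole-line integrals back to $(a,b)$, and the identification of $\D_{a+}^{2\gamma}v$ as the element of $(H^\gamma)^*$ paired against $w$ through the $L^2$-extension of the duality.
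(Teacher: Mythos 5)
The paper gives no proof of this lemma at all --- it is imported verbatim from Ervin and Roop \cite{Ervin2006} --- and your proposal correctly reconstructs the standard argument behind that citation: zero extension (legitimate precisely because $0<\gamma<1/2$, and matching the paper's Fourier definition of $\snm{\cdot}_{H^\gamma}$), the multipliers $(\pm\mathrm{i}\xi)^\gamma$, the support observation that turns the pairing over $(a,b)$ into a whole-line integral, Plancherel, and the even/odd cancellation yielding $\cos(\gamma\pi)$. This is also exactly the technique the paper itself deploys in its proof of \cref{lem:key}, so your approach coincides with the intended one; the only step requiring real care is the one you already flag, namely giving $\D_{a+}^{2\gamma}v$ a rigorous meaning as an element of $\left(H^\gamma(a,b)\right)^*$ so that the final two identities make sense.
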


\begin{lem}
  \label{lem:key}
  If $ 0 < \gamma < 1/2 $ and $ v \in L^2(0,1) $, then
  \begin{equation}
    \label{eq:key}
    C_1 \nm{\I_{0+}^{\gamma} v}_{L^2(0,1)}^2 \leqslant
    \left(
      \I_{0+}^{\gamma} v, \I_{T-}^{\gamma} v
    \right)_{L^2(0,1)}
    \leqslant C_2 \nm{\I_{0+}^\gamma v}_{L^2(0,1)}^2,
  \end{equation}
  where $ C_1 $ and $ C_2 $ are two positive constants that depend only on $
  \gamma $.
\end{lem}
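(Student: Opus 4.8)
The plan is to reduce the entire statement to a single Plancherel computation on the whole line; throughout I write $\I_{1-}$ for the right-endpoint integral on $(0,1)$. First I would extend $v$ by zero to $\tilde v\in L^2(\mathbb R)$ and denote by $\I_+^\gamma,\I_-^\gamma$ the left/right Riemann–Liouville integrals over $\mathbb R$, so that $\I_+^\gamma\tilde v$ is the convolution of $\tilde v$ with the causal kernel $t_+^{\gamma-1}/\Gamma(\gamma)$. Since $\tilde v$ is supported in $(0,1)$, the function $\I_+^\gamma\tilde v$ is supported in $[0,\infty)$ and coincides with $\I_{0+}^\gamma v$ on $(0,1)$, while $\I_-^\gamma\tilde v$ is supported in $(-\infty,1]$ and coincides with $\I_{1-}^\gamma v$ on $(0,1)$. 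The crucial elementary observation is that the pointwise product $(\I_+^\gamma\tilde v)(\I_-^\gamma\tilde v)$ then vanishes off $[0,1]$ — one factor dies on each side — so that
\[ (\I_{0+}^\gamma v, \I_{1-}^\gamma v)_{L^2(0,1)} = \int_{\mathbb R}(\I_+^\gamma\tilde v)(\I_-^\gamma\tilde v)\,\mathrm dt. \]

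Next I would apply Plancherel. Using the standard symbols $\mathcal F(\I_\pm^\gamma\tilde v)(\xi)=(\mp i\xi)^{-\gamma}\mathcal F\tilde v(\xi)$ with the principal branch $(\pm i\xi)^{-\gamma}=|\xi|^{-\gamma}e^{\mp i\frac{\pi\gamma}{2}\mathrm{sgn}\xi}$, and noting $\overline{(-i\xi)^{-\gamma}}=(i\xi)^{-\gamma}$, the frequency-side integrand is $(i\xi)^{-2\gamma}|\mathcal F\tilde v|^2=|\xi|^{-2\gamma}e^{-i\pi\gamma\,\mathrm{sgn}\xi}|\mathcal F\tilde v|^2$. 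Because $|\mathcal F\tilde v|^2$ is even, the odd imaginary part integrates to zero, and with $N:=\nm{\I_+^\gamma\tilde v}_{L^2(\mathbb R)}^2$ (finite because $2\gamma<1$ makes $|\xi|^{-2\gamma}$ locally integrable while $\mathcal F\tilde v$ is bounded and square-integrable, which simultaneously puts $\I_\pm^\gamma\tilde v$ in $L^2(\mathbb R)$) I obtain the key identity
\[ (\I_{0+}^\gamma v, \I_{1-}^\gamma v)_{L^2(0,1)} = \cos(\pi\gamma)\,N, \qquad 0<\cos(\pi\gamma)\le 1. \]

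The lower bound is then immediate: restricting to $(0,1)$ only decreases the $L^2(\mathbb R)$ norm, so $N\ge\nm{\I_{0+}^\gamma v}_{L^2(0,1)}^2$, giving the left inequality with $C_1=\cos(\pi\gamma)$. For the upper bound — which at first sight seems to require a delicate control of the tail $\nm{\I_+^\gamma\tilde v}_{L^2(1,\infty)}$ — I would avoid the tail altogether by combining Cauchy–Schwarz on $(0,1)$ with the frequency-side symmetry $\nm{\I_-^\gamma\tilde v}_{L^2(\mathbb R)}^2=\int_{\mathbb R}|\xi|^{-2\gamma}|\mathcal F\tilde v|^2=N$. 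Indeed,
\[ \cos(\pi\gamma)N = (\I_{0+}^\gamma v,\I_{1-}^\gamma v)_{L^2(0,1)} \le \nm{\I_{0+}^\gamma v}_{L^2(0,1)}\,\nm{\I_{1-}^\gamma v}_{L^2(0,1)} \le \nm{\I_{0+}^\gamma v}_{L^2(0,1)}\sqrt N, \]
and dividing by $\sqrt N$ yields $N\le\cos^{-2}(\pi\gamma)\,\nm{\I_{0+}^\gamma v}_{L^2(0,1)}^2$, whence $(\I_{0+}^\gamma v,\I_{1-}^\gamma v)_{L^2(0,1)}=\cos(\pi\gamma)N\le\cos^{-1}(\pi\gamma)\,\nm{\I_{0+}^\gamma v}_{L^2(0,1)}^2$, i.e. $C_2=\cos^{-1}(\pi\gamma)$.

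The main obstacle is the rigorous justification of the key identity: selecting the correct branch of $(\pm i\xi)^{-\gamma}$ so that the surviving real part is exactly $\cos(\pi\gamma)$, and licensing Plancherel by verifying $\I_\pm^\gamma\tilde v\in L^2(\mathbb R)$ together with the integrability of $|\xi|^{-2\gamma}|\mathcal F\tilde v(\xi)|^2$ near the origin. The support observation that upgrades the $L^2(0,1)$ inner product to a full-line integral is elementary but indispensable, since it is precisely what makes Plancherel applicable. Once the identity and the symmetry $\nm{\I_+^\gamma\tilde v}_{L^2(\mathbb R)}=\nm{\I_-^\gamma\tilde v}_{L^2(\mathbb R)}$ are in hand, both inequalities follow with the explicit constants $C_1=\cos(\pi\gamma)$ and $C_2=\cos^{-1}(\pi\gamma)$, and I expect no further difficulty.
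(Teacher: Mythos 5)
Your proposal is correct and follows essentially the same route as the paper's own proof: extend $v$ by zero, use the support observation to pass to a whole-line integral, apply Plancherel with the symbols $(\pm\mathrm{i}\xi)^{-\gamma}$ to obtain the identity $\left(\I_{0+}^{\gamma} v, \I_{1-}^{\gamma} v\right)_{L^2(0,1)} = \cos(\gamma\pi)\nm{w_{\pm}}_{L^2(\mathbb R)}^2$, and then conclude via the restriction estimates and Cauchy--Schwarz. The only difference is cosmetic: you spell out the explicit constants $C_1=\cos(\gamma\pi)$ and $C_2=\cos^{-1}(\gamma\pi)$ and the division-by-$\sqrt N$ step, which the paper leaves implicit in its final sentence.
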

\begin{proof}
  Extending $ v $ to $ \mathbb R \backslash (0,1) $ by zero, we define
  \begin{align*}
    w_{+}(t) &:= \frac1{ \Gamma(\gamma) }
    \int_{-\infty}^t (t-s)^{\gamma-1} v(s) \, \mathrm{d}s,
    \quad -\infty < t < \infty, \\
    w_{-}(t) &:= \frac1{ \Gamma(\gamma) }
    \int_t^{\infty} (s-t)^{\gamma-1} v(s) \, \mathrm{d}s,
    \quad -\infty < t < \infty.
  \end{align*}
  Since $ 0 < \gamma < 1/2 $, a routine calculation yields $ w_{+}, w_{-}
  \in L^2(\mathbb R) $, and \cite[Theorem~7.1]{Samko1993} implies that
  \[
    \begin{array}{ll}
      \mathcal Fw_{+}(\xi) = (\mathrm{i}\xi)^{-\gamma} \mathcal Fv(\xi),
      & -\infty < \xi < \infty, \\
      \mathcal Fw_{-}(\xi) = (-\mathrm{i}\xi)^{-\gamma} \mathcal Fv(\xi),
      & -\infty < \xi < \infty.
    \end{array}
  \]
  By the Plancherel Theorem and the same technique as that used to prove
  \cite[Lemma~2.4]{Ervin2006}, it follows that
  \begin{align*}
    {} &
    \left(
      \I_{0+}^{\gamma} v, \I_{1-}^{\gamma} v
    \right)_{L^2(0,1)} =
    (w_{+},w_{-})_{L^2(\mathbb R)} =
    (\mathcal Fw_{+}, \mathcal Fw_{-})_{L^2(\mathbb R)} \\
    ={} &
    \cos\big( \gamma\pi \big) \int_\mathbb R \snm\xi^{-2\gamma}
    \snm{\mathcal Fv(\xi)}^2 \, \mathrm{d}\xi \\
    ={} &
    \cos( \gamma\pi ) \nm{w_{+}}_{L^2(\mathbb R)}^2 =
    \cos( \gamma\pi ) \nm{w_{-}}_{L^2(\mathbb R)}^2.
  \end{align*}
  Therefore, by the Cauchy-Schwarz inequality, \cref{eq:key} follows from the
  following two estimates:
  \[
    \nm{ \I_{0+}^{\gamma} v }_{L^2(0,1)}
    \leqslant \nm{w_{+}}_{ L^2(\mathbb R) }, \quad
    \nm{ \I_{1-}^{\gamma} v }_{L^2(0,1)}
    \leqslant \nm{w_{-}}_{ L^2(\mathbb R) }.
  \]
\end{proof}

\begin{lem}
  \label{lem:regu}
  If $ \beta \in (0,1) \setminus \{0.5\} $ and $ 0 < \gamma < \infty $, then
  \begin{equation}
    \label{eq:regu-1}
    \nm{\I_{0+}^\gamma v}_{H^{\beta+\gamma}(0,1)} \leqslant
    C_{\beta,\gamma} \nm{v}_{H^\beta(0,1)}
  \end{equation}
  for all $ v \in H_0^\beta(0,1) $. Furthermore, if $ 0 < \gamma < 1/2 $ and $
  v \in H^{1-\gamma}(0,1) $ with $ v(0) = 0 $, then
  \begin{equation}
    \label{eq:regu-2}
    \nm{\I_{0+}^\gamma v}_{H^1(0,1)} \leqslant C_\gamma \nm{v}_{H^{1-\gamma}(0,1)}.
  \end{equation}
\end{lem}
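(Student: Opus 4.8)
The plan is to pass to the whole real line and exploit the Fourier symbol $(\mathrm i\xi)^{-\gamma}$ of the left fractional integral, exactly as in the proof of \cref{lem:key}. Consider first \cref{eq:regu-1} in the model range $0<\gamma<1$. I would extend $v$ by zero to a function $\tilde v$ on $\mathbb R$; here the hypotheses $v\in H_0^\beta(0,1)$ and $\beta\neq1/2$ are precisely what guarantee that this zero-extension is bounded, $\nm{\tilde v}_{H^\beta(\mathbb R)}\leqslant C_\beta\nm{v}_{H^\beta(0,1)}$, the borderline $\beta=1/2$ being excluded because zero-extension is unbounded there. Setting $w(t):=\frac1{\Gamma(\gamma)}\int_{-\infty}^t(t-s)^{\gamma-1}\tilde v(s)\,\mathrm ds$, the vanishing of $\tilde v$ on $(-\infty,0)$ makes $w$ coincide with $\I_{0+}^\gamma v$ on $(0,1)$, while $w\equiv0$ on $(-\infty,0)$ and $w$ is $C^\infty$ on $(1,\infty)$ since the kernel is then nonsingular. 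By \cite[Theorem~7.1]{Samko1993}, $\mathcal Fw(\xi)=(\mathrm i\xi)^{-\gamma}\mathcal F\tilde v(\xi)$.

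I would then estimate $\nm{\I_{0+}^\gamma v}_{H^{\beta+\gamma}(0,1)}\leqslant\nm{\chi w}_{H^{\beta+\gamma}(\mathbb R)}$ for a cutoff $\chi\in C_c^\infty(\mathbb R)$ with $\chi\equiv1$ on $[0,1]$. The high-frequency part is routine: for $\snm{\xi}\geqslant1$ one has $(1+\xi^2)^{\beta+\gamma}\snm{\xi}^{-2\gamma}\leqslant C(1+\xi^2)^\beta$, so $\int_{\snm{\xi}\geqslant1}(1+\xi^2)^{\beta+\gamma}\snm{\mathcal Fw}^2\leqslant C\nm{\tilde v}_{H^\beta(\mathbb R)}^2$, which produces the gain of $\gamma$ derivatives. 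The main obstacle is the low-frequency behavior: the symbol $(\mathrm i\xi)^{-\gamma}$ blows up at $\xi=0$, and for $\gamma\geqslant1/2$ the tail of $w$ decays only like $t^{\gamma-1}$, so $w\notin L^2(\mathbb R)$ globally and $\mathcal Fw\notin L^2$ near the origin; this is exactly why the cutoff $\chi$ is indispensable. The resolution I expect to use is that, since $\gamma<1$, the singular factor $\snm{\xi}^{-\gamma}$ is integrable near $0$ while $\mathcal F\tilde v$ is bounded there (as $\tilde v\in L^1$ has compact support), so $\mathbf 1_{\snm{\xi}\leqslant1}\mathcal Fw\in L^1(\mathbb R)$; convolving this piece with the Schwartz function $\mathcal F\chi$ regularizes it and contributes only rapidly decaying high frequencies to $\mathcal F(\chi w)$. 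Combined with the high-frequency bound, this yields $\chi w\in H^{\beta+\gamma}(\mathbb R)$ with the asserted norm control, and restriction to $(0,1)$ proves \cref{eq:regu-1}. For general $\gamma\geqslant1$ I would first reduce to this model range by the semigroup law $\I_{0+}^\gamma=\I_{0+}^{m}\I_{0+}^{\gamma-m}$ (\cref{lem:basic-frac}) with $m=\lfloor\gamma\rfloor$, using that ordinary $m$-fold integration maps $H^s(0,1)$ boundedly into $H^{s+m}(0,1)$ (since $(\I_{0+}^1g)'=g$ and $\I_{0+}^1g(0)=0$).

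For \cref{eq:regu-2} the same machinery applies, the only change being the extension. Now $1-\gamma\in(1/2,1)$ and only $v(0)=0$ is assumed, not $v(1)=0$. Since the values of $\I_{0+}^\gamma v$ on $(0,1)$ involve $v$ only through $(0,t)\subset(0,1)$, I would extend $v$ by zero on $(-\infty,0)$—legitimate in $H^{1-\gamma}$ because $v(0)=0$ and $1-\gamma\neq1/2$—and by a bounded Sobolev extension on $(1,\infty)$, producing $\tilde v\in H^{1-\gamma}(\mathbb R)$ with $\nm{\tilde v}_{H^{1-\gamma}(\mathbb R)}\leqslant C_\gamma\nm{v}_{H^{1-\gamma}(0,1)}$ without altering $w$ on $(0,1)$. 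Repeating the cutoff-and-symbol argument with target exponent $(1-\gamma)+\gamma=1$ then gives $\nm{\I_{0+}^\gamma v}_{H^1(0,1)}\leqslant C_\gamma\nm{v}_{H^{1-\gamma}(0,1)}$. Alternatively, one may argue directly from the identity $(\I_{0+}^\gamma v)'=\D_{0+}^{1-\gamma}v$ (established as in the proofs of \cref{lem:regu_ode_II,lem:regu_ode_III}) and bound $\nm{\D_{0+}^{1-\gamma}v}_{L^2(0,1)}$ through the same whole-line symbol estimate, the factor $\snm{\xi}^{1-\gamma}\leqslant(1+\xi^2)^{(1-\gamma)/2}$ replacing $\snm{\xi}^{-\gamma}$ and making the low-frequency step trivial, which I regard as the cleaner route here.
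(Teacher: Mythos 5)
Your proposal is correct in substance, but it takes a genuinely different route from the paper. For \cref{eq:regu-1} the paper gives no argument at all: it simply cites Lemma~A.4 of \cite{Li2017A}, so your zero-extension/Fourier-symbol proof (cutoff $\chi$ to handle the non-$L^2$ tail of $w$, the $L^1$ low-frequency piece smoothed by convolution with $\mathcal F\chi$, semigroup reduction for $\gamma\geqslant 1$) supplies a self-contained proof where the paper outsources one, and you correctly identified the two real traps — the failure of zero-extension at $\beta=1/2$ and the low-frequency blow-up of $(\mathrm i\xi)^{-\gamma}$. For \cref{eq:regu-2} the paper argues quite differently and more elementarily: it sets $\widetilde v:=v-g$ with $g(t):=t\,v(1)$, so that $\widetilde v\in H_0^{1-\gamma}(0,1)$ (both traces vanish), applies \cref{eq:regu-1} with $\beta=1-\gamma$ as a black box, and controls the $g$-terms by $\snm{v(1)}\leqslant C_\gamma\nm{v}_{H^{1-\gamma}(0,1)}$ via the embedding $H^{1-\gamma}(0,1)\hookrightarrow C[0,1]$ (using $1-\gamma>1/2$); this avoids extension operators and Hardy-type inequalities entirely. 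Your route instead invokes the one-sided zero-extension for $H^{1-\gamma}$ with vanishing trace, which is legitimate ($1/2<1-\gamma<1$, Hardy inequality) but is a fact of roughly the same depth as \cref{eq:regu-1} itself, whereas the paper's linear-correction trick needs nothing beyond \cref{eq:regu-1}. Two small points to tighten in your version: in the second part, take the right-hand Sobolev extension compactly supported (cut it off beyond, say, $t=2$) so that $\tilde v\in L^1(\mathbb R)$ and $\mathcal F\tilde v$ is bounded near $\xi=0$; and in your preferred alternative via $(\I_{0+}^\gamma v)'=\D_{0+}^{1-\gamma}v$ (which is immediate from the definition $\D_{0+}^{1-\gamma}=\D\,\I_{0+}^{\gamma}$), note that \cref{lem:coer} does not cover the order $1-\gamma\in(1/2,1)$, so the $L^2$ bound on $\D_{0+}^{1-\gamma}v$ must indeed come from the whole-line symbol estimate after the left zero-extension, exactly as you propose — with that justification spelled out, this alternative is arguably the cleanest of the three arguments.
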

\begin{proof}
  For the proof of \cref{eq:regu-1}, we refer the reader to \cite{Li2017A} (Lemma A.4). Let us prove
  \cref{eq:regu-2} as follows. Define $ \widetilde v := v - g $, where
  \[
    g(t) := t v(1), \quad 0 < t < 1.
  \]
  It is clear that $ \widetilde v \in H_0^{1-\gamma}(0,1) $, and hence
  \cref{eq:regu-1} implies
  \[
    \nm{ \I_{0+}^\gamma \widetilde v }_{H^1(0,1)} \leqslant
    C_\gamma \nm{\widetilde v}_{ H_0^{1-\gamma}(0,1) }.
  \]
  Therefore, from the evident estimate
  \[
    \nm{g}_{H^{1-\gamma}(0,1)} + \nm{\I_{0+}^\gamma g}_{H^1(0,1)}
    \leqslant C_\gamma \snm{v(1)},
  \]
  it follows that
  \begin{align*}
    \nm{\I_{0+}^\gamma v}_{H^1(0,1)}
    & \leqslant
    \nm{ \I_{0+}^\gamma \widetilde v }_{H^1(0,1)} +
    \nm{ \I_{0+}^\gamma g }_{H^1(0,1)} \\
    & \leqslant
    C_\gamma \nm{ \widetilde v }_{ H_0^{1-\gamma}(0,1) } +
    \nm{ \I_{0+}^\gamma g }_{H^1(0,1)} \\
    & \leqslant
    C_\gamma \left(\nm{v}_{ H^{1-\gamma}(0,1) } + \nm{g}_{ H^{1-\gamma}(0,1) } \right) +
    \nm{ \I_{0+}^\gamma g }_{H^1(0,1)} \\
    & \leqslant
    C_\gamma \left( \nm{v}_{ H^{1-\gamma}(0,1) } + \snm{v(1)} \right).
  \end{align*}
  As $ 0 < \gamma < 1/2 $ implies
  \[
    \nm{v}_{C[0,1]} \leqslant C_\gamma \nm{v}_{H^{1-\gamma}(0,1)},
  \]
  this indicates \cref{eq:regu-2} and thus proves the lemma.
\end{proof}

\begin{lem}
  \label{lem:xy}
  If $ 0 < \gamma < 1/2$ and $ v \in H^1(0,1) $, then
  \begin{equation}
    \label{eq:xy}
    C_1 \nm{v}_{ H^{1-\gamma}(0,1) } \leqslant
    \snm{v(0)} + \nm{ \I_{0+}^{\gamma} v' }_{L^2(0,1)} \leqslant
    C_2 \nm{v}_{H^{1-\gamma}(0,1)},
  \end{equation}
  where $ C_1 $ and $ C_2 $ are two positive constants that depend only on $
  \gamma $.
\end{lem}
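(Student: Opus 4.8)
The plan is to reduce the entire statement to a single representation of $v$. Since $v \in H^1(0,1)$, the fundamental theorem of calculus gives $\I_{0+}^1 v' = v - v(0)$, while the semigroup property in \cref{lem:basic-frac} factors $\I_{0+}^1 = \I_{0+}^{1-\gamma}\I_{0+}^{\gamma}$; hence, setting $w := \I_{0+}^{\gamma} v' \in L^2(0,1)$, I would use
\[
  v = v(0) + \I_{0+}^{1-\gamma} w .
\]
After this, the asserted two-sided bound is exactly the norm equivalence between $\nm{v}_{H^{1-\gamma}(0,1)}$ and $\snm{v(0)} + \nm{w}_{L^2(0,1)}$, which I would establish one inequality at a time.

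For the first inequality I would apply $\nm{\cdot}_{H^{1-\gamma}(0,1)}$ to the displayed identity and invoke the triangle inequality, together with $\nm{1}_{H^{1-\gamma}(0,1)} \le C_\gamma$ and the mapping property $\nm{\I_{0+}^{1-\gamma} w}_{H^{1-\gamma}(0,1)} \le C_\gamma \nm{w}_{L^2(0,1)}$. This last bound is the limiting case $\beta = 0$ of \cref{eq:regu-1}, a classical boundedness property of the Riemann--Liouville integral; note that $1-\gamma \in (1/2,1)$ stays away from the excluded exponent $1/2$. Collecting the terms then yields $\nm{v}_{H^{1-\gamma}(0,1)} \le C_\gamma(\snm{v(0)} + \nm{w}_{L^2(0,1)})$.

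For the second inequality I would treat the two summands separately. Because $1-\gamma > 1/2$, the embedding $H^{1-\gamma}(0,1) \hookrightarrow C[0,1]$ gives $\snm{v(0)} \le \nm{v}_{C[0,1]} \le C_\gamma \nm{v}_{H^{1-\gamma}(0,1)}$, which also shows that $\widetilde v := v - v(0)$ satisfies $\nm{\widetilde v}_{H^{1-\gamma}(0,1)} \le C_\gamma \nm{v}_{H^{1-\gamma}(0,1)}$. Since $\widetilde v(0) = 0$ and $\widetilde v' = v'$, the semigroup property gives $\I_{0+}^{\gamma}\widetilde v = \I_{0+}^1(\I_{0+}^{\gamma}\widetilde v')$, so that $(\I_{0+}^{\gamma}\widetilde v)' = \I_{0+}^{\gamma} v' = w$ and therefore $\nm{w}_{L^2(0,1)} \le \nm{\I_{0+}^{\gamma}\widetilde v}_{H^1(0,1)}$. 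Now \cref{eq:regu-2}, applied to $\widetilde v$ (whose hypothesis $\widetilde v(0)=0$ holds), bounds the right-hand side by $C_\gamma \nm{\widetilde v}_{H^{1-\gamma}(0,1)} \le C_\gamma \nm{v}_{H^{1-\gamma}(0,1)}$, completing this direction.

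The main obstacle is precisely the control of $\nm{\I_{0+}^{\gamma} v'}_{L^2(0,1)}$. One is tempted to rewrite it as $\nm{\D_{0+}^{1-\gamma}(v-v(0))}_{L^2(0,1)}$, but the order $1-\gamma$ exceeds $1/2$, so neither \cref{lem:coer} nor the Fourier computation behind \cref{lem:key} applies; concretely, the zero-extension of $v'$ retains a non-vanishing boundary contribution at $t=0$ that breaks the $L^2$-estimate. Subtracting $v(0)$ removes exactly this contribution, and the term $\snm{v(0)}$ in the statement is what absorbs it. This is why the reduction to $\widetilde v$ with $\widetilde v(0)=0$, and the vanishing hypothesis built into \cref{eq:regu-2}, are indispensable to the argument.
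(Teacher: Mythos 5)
Your proposal is correct and follows essentially the same route as the paper's own proof: the identity $\bigl(\I_{0+}^{\gamma}(v-v(0))\bigr)' = \I_{0+}^{\gamma}v'$ combined with \cref{eq:regu-2} applied to $v-v(0)$ gives one direction, while the representation $v = v(0) + \I_{0+}^{1-\gamma}\I_{0+}^{\gamma}v'$ together with the $L^2$-to-$H^{1-\gamma}$ mapping property of $\I_{0+}^{1-\gamma}$ gives the other, with the embedding $H^{1-\gamma}(0,1)\hookrightarrow C[0,1]$ handling the $\snm{v(0)}$ term in both. If anything, you are slightly more explicit than the paper in flagging that the mapping property invoked in the second direction is the limiting case $\beta=0$ of \cref{eq:regu-1}, which the paper uses implicitly under the citation of \cref{lem:regu}.
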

\begin{proof}
  Since a simple calculation gives
  \[
    \D \I_{0+}^{\gamma} (v-v(0)) =
    \D \I_{0+}^{\gamma} \I_{0+} v' =
    \I_{0+}^{\gamma} v',
  \]
  using \cref{lem:regu} yields
  \begin{align*}
    {} &
    \nm{ \I_{0+}^{\gamma} v' }_{L^2(0,1)} \leqslant
    \nm{ \I_{0+}^{\gamma}(v-v(0)) }_{H^1(0,1)} \\
    \leqslant{} &
    C_\gamma \nm{v-v(0)}_{ H^{1-\gamma}(0,1) } \leqslant
    C_\gamma \left( \snm{v(0)} + \nm{v}_{ H^{1-\gamma}(0,1) } \right),
  \end{align*}
  which, together with the estimate
  \[
    \snm{v(0)} \leqslant C_\gamma \nm{v}_{H^{1-\gamma}(0,1)}
    \quad \text{(since $ 1-\gamma > 0.5 $)},
  \]
  indicates
  \[
    \snm{v(0)} + \nm{ \I_{0+}^{\gamma} v' }_{L^2(0,1)}
    \leqslant C_\gamma \nm{v}_{ H^{1-\gamma}(0,1) }.
  \]
  Conversely, by
  \[
    v = \I_{0+}^{ 1-\gamma } \I_{0+}^{ \gamma } v' + v(0),
  \]
  using \cref{lem:regu} again yields
  \[
    \nm{v}_{ H^{1-\gamma}(0,1) } \leqslant
    C_\gamma \left( \snm{v(0)} + \nm{ \I_{0+}^{\gamma}v' }_{L^2(0,1)} \right).
  \]
  This lemma is thus proved.
\end{proof}



\end{document}